\numberwithin{equation}{section}
\numberwithin{figure}{section}
\newtheorem{theorem}{Theorem}[section]
\newtheorem*{thmA}{Theorem~A}
\newtheorem*{thmBB}{Theorem~B}
\newtheorem*{thmC}{Theorem~C}
\theoremstyle{definition}
\newtheorem {prop} [theorem] {Proposition}
\newtheorem {cory} [theorem] {Corollary}
\newtheorem {conjecture} [theorem] {Conjecture}
\newtheorem {lem} [theorem] {Lemma}
\newtheorem {defi} [theorem] {Definition}
\newtheorem {obs} [theorem] {Remark}
\newcommand{\R}{\ensuremath{\mathbb{R}}}
\def\sideremark#1{\ifvmode\leavevmode\fi\vadjust{\vbox to0pt{\vss 
    \hbox to 0pt{\hskip\hsize\hskip1em           
 \vbox{\hsize2cm\tiny\raggedright\pretolerance10000
 \noindent #1\hfill}\hss}\vbox to8pt{\vfil}\vss}}}%
\newcommand{\xyL}[1]{%
	\xydef@\xymatrixrowsep@{#1}
} 
\newcommand{\xyC}[1]{%
	\xydef@\xymatrixcolsep@{#1}
} 
\begin{document}
\title{Reading multiplicity in unfoldings\\ from $\varepsilon$-neighborhoods of orbits}

\author[R. Huzak]{Renato Huzak}
\address{Hasselt University, Campus Diepenbeek, Agoralaan Gebouw D, Diepenbeek 3590, Belgium}
\email{{renato.huzak@uhasselt.be}}

\author[P. Marde\v{s}i\'c]{Pavao Marde\v{s}i\'c}
\address{Universit\'e de Bourgogne, Institut de Mathématiques de Bourgogne, 9 avenue Alain Savary, 21078 Dijon, France.
and University of Zagreb, Faculty of Science, Department of Mathematics, Bijeni\v cka cesta 30 / Horvatovac 102a, 10000 Zagreb, Croatia.
}
\email{{pavao.mardesic@u-bourgogne.fr}}

\author[M. Resman]{Maja Resman}
\address{University of Zagreb, Faculty of Science, Department of Mathematics, Bijeni\v cka cesta 30 / Horvatovac 102a, 10000 Zagreb, Croatia.}
\email{{maja.resman@math.hr}}

\author[V. \v{Z}upanovi\'c]{Vesna \v{Z}upanovi\'c}
\address{University of Zagreb, Faculty of Electrical Engineering and Computing, Unska 3, 10000 Zagreb, Croatia.}
\email{{vesna.zupanovic@fer.unizg.hr}}

\date{July 5, 2023}

\subjclass[2020]{37G10, 34C23, 28A80, 37C45, 37M20}

\keywords{unfoldings, epsilon-neighborhoods, compensators, Chebyshev scale}

\thanks{Corresponding author: Maja Resman, maja.resman@math.hr}

\begin{abstract} We consider generic  analytic $1$-parameter unfoldings of saddle-node 
germs of analytic vector fields on the real line, their time-one maps and the Lebesgue measure of $\varepsilon$-neighborhoods of the orbits of these time-one maps. 

The box dimension of an orbit gives the asymptotics of the principal term of this Lebesgue measure and it is known that it is discontinuous at bifurcation parameters. 

In order to recover continuous dependence of the asymptotics on the parameter, here we expand asymptotically the Lebesgue measure of $\varepsilon$-neighborhoods of orbits of time-one maps in a \emph{Chebyshev} system, \emph{uniformly with respect to the bifurcation parameter}. We use \emph{\' Ecalle-Roussarie-type compensators}. 

We show how the number of 
fixed points of the time-one map born in the universal analytic unfolding of the parabolic point 
 corresponds to the number of  terms vanishing in
this uniform expansion of the Lebesgue measure of $\varepsilon$-neighborhoods of orbits.

%
\end{abstract} 

\maketitle
\tableofcontents

\section{Introduction}

In this article we study diffeomorphisms  $f(x)$, or rather families of diffeomorphisms $f_\nu(x)$ on the real line. 
We want to relate the \emph{multiplicity} of a fixed point of a diffeomorphism $f$ with the local dynamical properties of the \emph{density} of its orbit.

By the \emph{multiplicity} of a fixed point of a diffeomorphism $f$, in a family $f_\nu$, we mean the maximal number of fixed points born from the fixed point in the family of diffeomorphisms $f_\nu$ deforming $f$.

The dynamical \emph{density} properties of an orbit   are encoded by the \emph{tube function} $\ell(\varepsilon)=\ell(T_{\varepsilon,\nu})$, defined below using the Lebesgue measure $\ell$ of $\varepsilon$-neighborhoods of  orbits converging to a fixed point for the family of diffeomorphisms $f_\nu$. 

The idea of reading the multiplicity of a fixed point of a diffeomorphism from tube functions of its orbits was already explored in \cite{lana, mrz, hopf}. However, no uniform expansion of the tube function for the family $f_\nu$ was given. It is known that, for studying bifurcations of zeros, expansions have to be uniform with respect to the parameters. The  problem of constructing a uniform expansion of the tube function, and relating its bifurcation properties to the bifurcations of fixed points is addressed in the present paper.

Here, for simplicity, we use only the tail part $T_{\varepsilon,\nu}$ of the $\varepsilon$-neighborhood of an orbit (see \eqref{eq:TN}), as it carries the same information as the full tube function.  
Given a diffeomorphism $f$ defined in a real interval, we associate with it the \emph{displacement function} $g=id-f$, where $id$ is the identity function. 

Recall that for differentiable functions,  the multiplicity of a fixed point in a sufficiently general unfolding is given by the number of terms vanishing in the Taylor expansion at the fixed point.
In particular, this is the case for the displacement function $x\mapsto g(x)$,

On the other hand, the tube function $\varepsilon\mapsto  \ell(T_{\varepsilon,\nu})$ of an unfolding $f_\nu$ of $f$ is not differentiable at $\varepsilon=0$.
If a family of real functions is not necessarily differentiable, but admits an asymptotic expansion in a Chebyshev scale (see Definition \ref{def:CH}), uniform with respect to the parameter $\nu$, then the multiplicity of a zero is given by the number of terms vanishing in this asymptotic expansion. 

We show in Theorem B
that the tube function $\ell(T_{\varepsilon,\nu})$, in the case studied in this paper, admits a uniform asymptotic expansion in a Chebyshev scale, multiplied by a common function $I$, see \eqref{eq:Cheb}. However, by its definition as a Lebesgue measure of a set, $\ell(T_{\varepsilon,\nu})$ cannot be zero for any $\varepsilon>0$: the function $I(\varepsilon,\nu)$ explodes precisely in points where $\frac{\ell(T_{\varepsilon,\nu})}{I(\varepsilon,\nu)}$ vanishes, thus canceling the zeros. 

By abuse, by multiplicity in such a family, we will mean the number of leading terms vanishing in the asymptotic scale.

The motivation for this article lies in the conjecture that we formulate here:

\begin{conjecture}
 \label{conj} Let $f$ be a local diffemorphism of the real line at a fixed point $x=0$, which is attractive from one side.
 Let $f_\nu$, $\nu\in \mathbb{R}^k$,  be a generic deformation of the diffeomorphism $f$  i.e. a deformation realizing the maximal multiplicity of the fixed point.
 Consider the restriction of $f_\nu$, to $\nu\in V\subset\mathbb{R}^k$, such that for all $\nu\in V$, $f_\nu$
 has a fixed point born from $x=0$.
 
Consider a point $x_0$ whose orbit by $f_0$ converges to the fixed point $x=0$. Let $\ell(T_{\nu,\varepsilon})$
be the corresponding tube function of $f_\nu$. 
Then, there exists a function $I(\varepsilon,\nu)$ and an asymptotic expansion of the length of the tail $\ell(T_{\nu,\varepsilon})$ 
uniform in $\nu\in V$, as $\nu\to 0$, see \eqref{eq:expa1}, in a Chebyshev scale multiplied termwise by $I(\nu,\varepsilon)$.

The multiplicity of a fixed point of a diffeomorphism $x\mapsto f(x)$ in the family $f_\nu$, $\nu\in V$,
coincides with the number of vanishing terms of
$\ell(T_{\nu,\varepsilon})$ at parameter value $\nu=0$ in this asymptotic scale.
\end{conjecture}
\medskip
Note that the two function: $g_\nu(x)$ and $\ell(T_{\nu,\varepsilon})$, related by the conjecture, 
live in completely different spaces: the phase space of values of $x$, for $f_\nu-\mathrm{id},$ and the parameter  $\varepsilon$ measuring the size of the $\varepsilon$-neighborhoods of orbits for the tube function $\ell(T_{\varepsilon,\nu})$.

Our principal result (Theorem A) is the proof of the Conjecture \ref{conj} in the simplest non-trivial case
of generic saddle-node bifurcations.
That is,  diffeomorphisms having a parabolic fixed point bifurcating into a diffeomorphism with two hyperbolic fixed points on the real line.
Moreover, we assume that this parabolic diffeomorphism is given as time-one map of a vector field.

\medskip

\color{black}

\medskip


\subsection{Outline of the results}

 Consider generic  analytic $1$-parameter  unfoldings of real  analytic saddle-node vector fields $X_\nu$, whose flow is given by $$\frac{dx}{dt}=F(x,\nu), \ x\in\R, \ \nu\in\mathbb R,\ (x,\nu)\to (0,0).$$
Then, two hyperbolic singular points are born from the saddle-node singular point at $\nu=0$  in the so-called \emph{saddle-node bifurcation} (see e.g. \cite{guckholm}). By $f_\nu$, we denote the time-one map of $X_\nu$, which is an unfolding of a parabolic diffeomorphism. Let $g_\nu:=\mathrm{id}-f_\nu$ be the corresponding displacement function. 

In the main Theorem~A (Section 1.1), we give a $1-1$ correspondence between the asymptotic expansion of the time-one map $f_\nu$ (i.e. of the displacement function $g_\nu$) in the phase space and the uniform asymptotic expansion of the tail tube function of its orbit in an appropriate compensator variable. Recall the reason for introducing compensators: in the $\varepsilon$-space the expansion of the tube function was not uniform with respect to the parameter $\nu$. Theorem A shows that we can read the multiplicity of a fixed point in the given bifurcation (how many fixed points can bifurcate from it in the given family), from the  number of terms vanishing in the uniform asymptotic expansion of the Lebesgue measure of the tail of an orbit of the time-one map. 

Proposition~\ref{prop:pm} and Theorem~B give precisely these uniform asymptotic expansions of the tail tube functions $\ell(\varepsilon)$ in appropriate Chebyshev systems including compensators. They are used in the proof of the main Theorem A at the end of Section~\ref{sec:main}. Proposition~\ref{prop:pm} concerns the model vector field case, while Theorem~B is for general vector fields of the form \eqref{eq:polje} under generic assumptions. Note that, due to computational reasons, the expansions are given for the continuous counterpart $\ell^c(T_{\varepsilon,\nu})$ of the length first and then, in Corollary~\ref{cor:discrete}, for the standard length $\ell(T_{\varepsilon,\nu})$. For the standard length $\ell(T_{\varepsilon,\nu})$, some additional oscillatory terms appear in the expansion due to the step function nature of the discrete critical time separating the tail and the nucleus, see Subsection~\ref{sec:conttime}.

In Theorem~C the expansions from Proposition~\ref{prop:pm} and Theorem~B are regrouped so that (in general infinitely many) terms from the same group at $\nu\neq 0$ merge to the same asymptotic term at the bifurcation value $\nu=0$. This illustrates the earlier mentioned phenomenon that the limit as $\nu\to 0$ does not commute with asymptotic expansion in $\varepsilon$ of the tube function. Also, we get in Section~\ref{ss:prvi} the asymptotic expansions in $\varepsilon\to 0$  of $\ell(T_{\varepsilon,\nu})$ in the case $\nu>0$ and $\nu=0$. The expansions have qualitatively different terms, which, in particular, results in a jump in the box dimension at the moment of bifurcation. Theorem~C is alsoused in Remark~\ref{rem:zadnji} for reading the formal class of the unfolding from fractal data.

\color{black}


\medskip


Finally, note that the multiplicity (here $2$) is one of the analytic invariants of saddle-node vector field unfoldings \cite{kostov, KR}.  For reading the other analytic invariants by tube function, see Remark \ref{rem:zadnji}.
We recall also that in \cite{kmrr} parabolic germs were studied (not depending on parameters and not necessarily given by a time-one map of a saddle-node). It was shown how to read the analytic invariants of these germs from their orbit.

\subsection{Main results}\label{sub:mr}
We consider an analytic germ of a system
\begin{equation}\label{eq:polje}
\frac{dx}{dt}=F(x,\nu),
\end{equation}
with $F$ real, analytic germ in $x$ and in parameter $\nu$, and with a non-hyperbolic singular point $x=0$ at the bifurcation value $\nu=0$ (i.e. $F(0,0)=0$, $F_x(0,0)=0$), satisfying the generic assumptions:
\begin{align}\label{eq:gene}
F_{\nu}(0,0)\neq 0,\ F_{xx}(0,0)\neq 0.    
\end{align}
Under these assumptions, the saddle-node point at $x=0$ bifurcates at $\nu=0$  into two  hyperbolic points on the real axis: one attracting and one repelling,  for $\nu\in(0,\delta)$, or $\nu\in(-\delta,0)$ 
depending on the sign of $F_\nu$ and $F_{xx}$ in \eqref{eq:gene}. 
For details, see Section~\ref{sec:snb}.

\begin{defi}[tube function $\ell(T_{\varepsilon,\nu})$]\label{l}
Take $x_0$ in the attracting basin of the saddle-node point, sufficiently close to $0$. Consider the time-one map $f_\nu$ of \eqref{eq:polje} and its orbit with initial point $x_0$:
$$\mathcal O_{f_\nu}(x_0):=\{f_\nu^{n}(x_0):\ n\in\mathbb N_0\}.$$ By $\mathcal O_{f_\nu}(x_0)_\varepsilon$, we denote its $\varepsilon$-neighborhood, $\varepsilon>0$. By \cite{tricot},
the $\varepsilon$-neighborhood of the orbit $\mathcal O_{f_\nu}(x_0)$ is a disjoint union of two parts, the tail and the nucleus:
\begin{equation}\label{eq:TN}
 \mathcal O_{f_\nu}(x_0)_\varepsilon=T_{\varepsilon,\nu}\cup N_{\varepsilon,\nu}.
\end{equation}
Here, the \emph{tail} $T_{\varepsilon,\nu}$ is the union of finitely many disjoint intervals, and the  \emph{nucleus} $N_{\varepsilon,\nu}$ one interval made of a  union of infinitely many overlapping $\varepsilon$-neighborhoods of points in the orbit. By $\mathcal\ell$ we denote the Lebesgue measure. We call the function 
$\ell(T_{\varepsilon,\nu})$, the \emph{tube function of the family $f_\nu$}. 
\end{defi}

In \cite{mrz,formal}, we studied the length $ \ell(\mathcal O_{f_\nu}(x_0)_\varepsilon)=\ell (T_{\varepsilon,\nu})+ \ell(N_{\varepsilon,\nu})$.
However, the essential information is carried already by $\ell (T_{\varepsilon,\nu})$ \cite{frankenhuijsen,kmrr}. Hence, we investigate here only this term. Moreover, the dependence on $x_0$ is not essential.

\bigskip

In the main Theorem A below, we use the notion of Chebyshev systems. Chebyshev systems are generalizations of Taylor power monomial scales, on which the \emph{division-derivation algorithm} can be performed, see \cite{mardesic}. More precisely,  
\begin{defi}[Chebyshev system, \cite{mardesic}]\label{def:CH} Let $I=(-r,r)$ or $I=[0,r)$, $r>0$. A finite sequence $\{u_0= 1, u_1, u_2,\ldots\,u_m\}$, $m\in\mathbb N$, of
functions of the class $C(I)\cap C^m(I\setminus\{0\})$,  is called a Chebyshev system if the functions $D_i(u_k)$, $i,\,k = 0,\ldots,m,$ are well defined on $I$ inductively by the following division and differentiation algorithm:
\begin{align}
&D_0(u_k) = u_k,\label{eq:CH}\\
&D_{i+1}(u_k) = \frac{(D_i(u_k))'}{(D_i(u_{i+1}))'},\ i = 0,\ldots,m-1,\nonumber
\end{align}
for every $0\leq k\leq m$, except possibly at $x = 0$, to which they are extended by continuity.
\end{defi}
\noindent We allow the functions $u_i$ to depend continuously on a parameter $\nu$, but we require that the differential operators $D_i$ from \eqref{eq:CH} be well-defined on an interval $I$ uniform with respect to $\nu$.

\noindent Note that by linearity \eqref{eq:CH} extends to operators well-defined on the space of functions generated by $\{u_0= 1,\ldots,u_m\}$. 
\medskip
    
Let the unfolding  $g_{\nu}$ on $I$ be generated by $\{u_0,\ldots,u_m\}$ and let $g_0=a_k u_k+\ldots+a_m u_m$, $k\geq 0$, $a_k\neq 0$. Assume moreover that there exists $r>0$ chosen uniformly with respect to $\nu$ such that $u_0$ has no zeros in $I$, except possibly the origin. If $u_0(0)\neq 0$ then, by Rolle's theorem, the maximal number of zeros that can be born from the origin in unfolding $g_\nu$ is bounded above by $k$, i.e. by the number of missing Chebyshev terms in $g_0$ before the leading one. If $u_0(0)=0$ then, adding the zero at the origin, the maximal number of zeros that can be born from $0$ in unfolding $g_\nu$ is bounded above by $k+1$.
 The latter will be the case with our expansions both of the displacement and of the tube function in Theorem A.

\color{black}
\smallskip

We now state the main theorem.
\begin{thmA}\label{thmB} Let \eqref{eq:polje} be a  generic $1$-parameter analytic unfolding of a parabolic fixed point, satisfying the generic assumptions \eqref{eq:gene}.  Assume unilateral values of parameter $\nu$ for which parabolic point unfolds into two hyperbolic points: without loss of generality, $\nu\in[0,\delta)$. Let $f_\nu$ be the time-one map and $\mathcal O_{f_\nu}(x_0)$ be its attractive orbit starting at $x_0$. 

There exists a compensator variable $\eta(\varepsilon,\nu)$ and an asymptotic expansion of the length of the tail $\ell(T_{\nu,\varepsilon})$ in a Chebyshev system,  uniform in $\nu\in [0,d)$, as $\eta\to 0$, see \eqref{eq:expa1}. 

There is a $1-1$ correspondence between the expansion of the length $\ell(T_{\nu,\varepsilon})$ in the $\eta$ variable and the Taylor expansion of the \emph{displacement function} $g_\nu:=\mathrm{id}-f_\nu$ in the phase $x$-variable, in the following sense. For every value of the parameter $\nu$, the number of vanishing terms of the expansions of $\ell(T_{\nu,\varepsilon})$ and $g_\nu$, in the corresponding systems, is the same  $($here, for $1$-parameter unfoldings, at most $1)$.
\end{thmA}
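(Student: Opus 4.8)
The plan is to prove the $1$–$1$ correspondence in two stages: first analyze the saddle-node normal form to make the displacement function explicit, then track how its Taylor coefficients propagate into the coefficients of the Chebyshev expansion of $\ell(T_{\nu,\varepsilon})$. Under the generic assumptions \eqref{eq:gene}, I would begin by reducing \eqref{eq:polje} to a standard saddle-node normal form, so that for $\nu>0$ the displacement function $g_\nu=\mathrm{id}-f_\nu$ has exactly two zeros (the attracting and repelling fixed points) merging into a single double zero at $\nu=0$. The codimension-$2$ structure then appears in the Taylor expansion of $g_\nu$ in $x$: at $\nu=0$ the first two coefficients vanish (the function behaves like $x^2$ times a unit), reflecting the double root, whereas for $\nu>0$ the lowest-order coefficient is nonzero. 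The quantity to track is the \emph{number of vanishing leading Taylor coefficients} of $g_\nu$ as a function of $\nu$.

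Next I would invoke Theorems~B.1 and B.2, which already supply the uniform asymptotic expansion of $\ell(T_{\nu,\varepsilon})$ in the Chebyshev scale built from the \'Ecalle--Roussarie compensator $\eta(\varepsilon,\nu)$. The core of the argument is to set up an explicit dictionary between the two expansions. The tail length is governed by the way successive iterates $f_\nu^n(x_0)$ accumulate near the attracting fixed point; the spacing of consecutive orbit points is essentially $g_\nu$ evaluated along the orbit. I would therefore express the leading coefficients of the $\eta$-expansion of $\ell(T_{\nu,\varepsilon})$ in terms of the Taylor coefficients of $g_\nu$, using the standard relation (going back to the analysis in \cite{mrz,formal}) between the asymptotics of an orbit's $\varepsilon$-neighborhood and the asymptotics of the generating map near its fixed point. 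The compensator is precisely the device that keeps this correspondence uniform as $\nu\to 0$, absorbing the logarithmic degeneration that otherwise causes the box-dimension jump.

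Concretely, the key step is to show that each Taylor coefficient of $g_\nu$ vanishes \emph{if and only if} the corresponding coefficient in the Chebyshev-scale expansion of $\ell(T_{\nu,\varepsilon})$ vanishes, with the indexing matched term by term. I would do this by comparing the two expansions order by order: the triangular (unipotent) structure of the change of coordinates between the phase-space scale $\{x^k\}$ and the $\varepsilon$-space Chebyshev scale in $\eta$ guarantees that the transition matrix between the coefficient vectors is invertible, so a leading block of coefficients vanishes on one side exactly when the corresponding leading block vanishes on the other. Since the Chebyshev scale admits a differentiation–division algorithm (the defining property recalled after \cite{mardesic}), the leading nonvanishing term is well defined and the count of vanishing terms is an invariant that is read off identically from either side.

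The main obstacle I anticipate is establishing that the transition between the two scales is genuinely \emph{unipotent and uniform in $\nu$}, rather than merely invertible for each fixed $\nu>0$: the nonuniformity that produces the discontinuity of the box dimension at $\nu=0$ must be shown to be entirely confined to the compensator variable $\eta$, so that once the expansion is written in the $\eta$-Chebyshev scale the coefficient-matching is continuous up to and including $\nu=0$. Verifying this requires controlling the error terms in Theorems~B.1 and B.2 uniformly and checking that no coefficient can vanish "accidentally" on one side while its partner survives on the other — equivalently, that the leading diagonal entries of the transition matrix never degenerate. This is where the explicit form of the normal form and the precise structure of the compensator must be used most carefully.
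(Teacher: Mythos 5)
Your overall strategy (change to the compensator variable $\eta$, invoke Theorems~B.1/B.2, and match coefficients between the $\eta$-expansion of the tail length and the Taylor expansion of $g_\nu$) is the same as the paper's, but the central step is left unexecuted, and it is precisely the step that carries the whole theorem. You posit a ``transition matrix'' between the phase-space scale $\{x^k\}$ and the Chebyshev scale in $\eta$, and you correctly flag that one must prove it is unipotent, uniform in $\nu$, and non-degenerate on the diagonal so that no coefficient vanishes ``accidentally'' on one side only --- but you then defer this to ``where the explicit form of the normal form and the precise structure of the compensator must be used most carefully.'' As written, the correspondence is therefore asserted rather than proved: invertibility of a triangular change of scale does not by itself preserve the \emph{number of vanishing leading terms} unless the diagonal entries are uniformly bounded away from zero, and nothing in your argument establishes that.

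The missing idea that closes this gap is an exact factorization, not a matrix computation. From the definition $\ell^c(T_{\varepsilon,\nu})=\tau_\varepsilon^\nu\cdot 2\varepsilon$ with $\tau_\varepsilon^\nu=\Psi_\nu(g_\nu^{-1}(2\varepsilon))-\Psi_\nu(x_0)$, the substitution $\eta=\theta_{-x_1^\nu}\circ g_\nu^{-1}(2\varepsilon)$ gives identically
$$
\ell^c(T_{\varepsilon,\nu})=\bigl(\Psi_\nu(\eta+x_1^\nu)-\Psi_\nu(x_0)\bigr)\cdot g_\nu(\eta+x_1^\nu),
$$
i.e.\ the tail length is the displacement function itself, evaluated at $\eta+x_1^\nu$, times the common prefactor $I$ that defines every element of the Chebyshev scale $\{I\eta,\,I\eta^2,\ldots\}$. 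Hence the coefficient of the $k$-th scale element \emph{is} the $k$-th Taylor coefficient of $g_\nu$ at $x_1^\nu$; your transition matrix is the identity. The only substantive input is that the prefactor $\Psi_\nu(\eta+x_1^\nu)-\Psi_\nu(x_0)$ has no zeros on a neighborhood $\eta\in[0,d)$ uniform in $\nu\in[0,\delta)$, which the paper gets from the sign of $(\Psi_\nu^{mod})'$ (and, for the discrete length $\ell(T_{\varepsilon,\nu})$, from the boundedness of the oscillatory correction $G$ in Corollary~\ref{cor:discrete}, a point your proposal does not address). Without writing down this factorization, your argument does not actually rule out the degeneration you yourself identify as the main obstacle.
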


Recall that the number of terms in a Chebyshev expansion of a family of functions vanishing at the bifurcation value gives the multiplicity. Zero points of the displacement function $g_\nu$ correspond to the fixed points of the time-one map, that is, to the singularities of the vector field.

As a direct consequence of Theorem~A, the multiplicity in the bifurcation can be read from the expansion of tube function $\ell(T_{\varepsilon,\nu})$ in compensator variable $\eta$. Precisely, by expansions given in Proposition~\ref{prop:pm} Theorem B, the multiplicity is the number of vanishing Chebyshev terms of the tube function at bifurcation value $\nu=0$ plus $1$. Here it equals $2$. 

\begin{obs}\label{rem:I}
Note that the number of vanishing terms in the uniform asymptotic scale for the tube function  (see Proposition~\ref{prop:pm} and Theorem~B) at the bifurcation value $\nu=0$, which we call \emph{multiplicity} of the expansion of the tube function in the given asymptotic scale, does not imply the number of zero points of the tube function that bifurcate from $\varepsilon=0$. Indeed, the tube function $\ell(T_{\varepsilon,\nu})$ measures the length of the $\varepsilon$-neighborhood of the orbit and is therefore strictly positive for all $\varepsilon>0$ and zero only at $\varepsilon=0$. The reason for that is the term $I(\nu,\eta)$ which is common to all terms of the scale for the expansion, which is strictly positive but explodes to $\infty$ exactly at zero points of the quotient $\frac{\ell(T_{\varepsilon,\nu})}{I(\nu,\eta)}$. 

Despite these singularities of the strictly positive common factor $I(\nu,\eta)$, by abuse we will nevertheless call the asymptotic scale for $\ell(T_{\varepsilon,\nu})$ in Proposition~\ref{prop:pm} and Theorem~B \emph{Chebyshev}. The number of vanishing terms of the expansion of the tube function at $\nu=0$ gives only an upper bound on the number of zero points of $\ell(T_{\varepsilon,\nu})$, none of which are really zero points of the tube function, due to singularities of the common factor. However, this upper bound equals the multiplicity of the  generic unfolding.
\end{obs}

\color{black}

\section{Notations and main objects}
\subsection{Normal forms for the saddle-node bifurcation}\label{sec:snb}

The saddle-node bifurcation is a generic $1$-parameter bifurcation of $1$-dimensional vector fields. 

Consider a system 
\begin{equation}\label{eq:F} 
\frac{dx}{dt}=F(x,\nu),
\end{equation}
with 
$F$ real and analytic in $x$ and in $\nu$, having at $\nu=0$ a \emph{non-hyperbolic}\footnote{\emph{Non-hyperbolic} means that $F_x(0,0)=0$.} singular point $x=0$ (i.e. undergoing a bifurcation of the singular point at $\nu=0$) and which satisfies generic assumptions $F_{xx}(0,0)\neq 0$ and $F_\nu(0,0)\neq 0$.




\medskip

By \cite{kostov,KR}, by a \emph{weak analytic change of variables}, the system 
\eqref{eq:F} can be brought to the form

\begin{equation}\label{eq:model}
\frac{dx}{dt}=F_{mod}(x,\nu),\ F_{mod}(x,\nu):=\frac{-x^2+\nu}{1+\rho(\nu)x},\ \nu\in(-\delta,\delta).
\end{equation}

\noindent The two analytic invariants are the multiplicity $k=2$ and the \emph{residual invariant} $\nu\mapsto \rho(\nu),\ \nu\geq 0.$ Here, weak analytic 
change of variables is
an analytic germ (germified at $x=0,\ \nu=0$) of change of variables, fibered in $\nu$ \cite{MRouss, KR}, i.e. of the form
\begin{equation}\label{eq:tmaps}
\Phi(x,\nu)=(\varphi_{\nu}(x),h(\nu)),
\end{equation}
where $h$ is an analytic diffeomorphism such that $h(0)=0$ and $\varphi_\nu\in \mathbb R\{x\}$, $\nu\in(-\delta,\delta)$, is an analytic diffeomorphism 
conjugating one to the other. For the corresponding time-one maps it holds that:
$$
f_{h(\nu)}^{mod}=\varphi_{\nu}\circ f_{\nu}\circ\varphi_{\nu}^{-1}.   
$$
Note that the adjective \emph{weak} refers to the possible bijective reparametrization $h(\nu)$ of the parameter $\nu$. 

For $\nu<0$ there are no real singular points of the model. For $\nu>0$, there are two singular hyperbolic points: attracting $\sqrt{\nu}$ and repelling $-\sqrt{\nu}$. For $\nu=0$ the point zero is a  saddle-node singular point, attracting from the right and repelling from the left. Choose an initial point $x_0\neq 0$. For small values of $\nu\in[0,\delta)$, $\delta>0$, $x_0$ lies outside $[-\sqrt{\nu},\sqrt{\nu}]$, but stays in the attracting resp. repelling basin of $\sqrt\nu$ resp. $-\sqrt\nu$,  \cite{MRouss}. 

We suppose $x_0>0$, so that (for $\nu$ sufficiently small and positive) it lies in the attractive basin for the bifurcation. Otherwise, if we choose $x_0<0$, we consider the opposite field, $\dot x=x^2-\nu$, 
so that $x_0$ lies again in the basin of attraction. 



\subsection{The continuous-time length of the tail of orbits}\label{sec:conttime}

In this subsection we show how to calculate the length $\ell(T_{\varepsilon,\nu})$ of the tail of the orbit $\mathcal O_{f_\nu}(x_0)$.  Recall \cite{tricot}, that 
the  \emph{discrete critical time} $n_\varepsilon^\nu\in\mathbb N$,  
 separates the order of iterates of $f_\nu$, for which   $\varepsilon$-neighborhoods of points in the orbit are not overlapping, from the order for which  overlapping of these $\varepsilon$-neighborhoods starts.

It is determined by the inequalities:
$$
f_\nu^{n_\varepsilon^\nu}(x_0)-f_\nu^{n_\varepsilon^\nu+1}(x_0)\leq 2\varepsilon,\ \ f_\nu^{n_\varepsilon^\nu-1}(x_0)-f_\nu^{n_\varepsilon^\nu}(x_0)> 2\varepsilon.
$$
The $\varepsilon$-neighborhood of an orbit $\mathcal{O}_{f_\nu}(x_0)$
consists of the nucleus $N_{\varepsilon,\nu}$ and the tail $T_{\varepsilon,\nu}$.
The nucleus is the overlapping part of $\varepsilon$-neighborhood of the orbit and the 
tail consists of $n_\varepsilon^\nu$ nonintersecting  intervals each of length $2\varepsilon$. Hence 

$$
\ell(T_{\varepsilon,\nu})=n_{\varepsilon}^\nu\cdot  2\varepsilon.
$$

Since the critical time $\varepsilon\mapsto n_\varepsilon^\nu$ is a step function, for a fixed $\nu$, the function $\varepsilon\mapsto\ell(T_{\varepsilon,\nu})$ does not have a full asymptotic expansion as $\varepsilon\to 0$. Therefore, we replace $n_\varepsilon^\nu$ by the so-called \emph{continuous critical time } $\tau_\varepsilon^\nu\in\mathbb R$ satisfying:
$$
f_\nu^{ \tau_\varepsilon^\nu}(x_0)-f_\nu^{\tau_\varepsilon^\nu+1}(x_0)=g_\nu(f_\nu^{ \tau_\varepsilon^\nu}(x_0))=\,2\varepsilon.
$$
Here, $\{f_\nu^t:t\in\mathbb R\}$ is the \emph{flow} of the field $X_\nu=F(x,\nu)\frac{d}{dx}$ given by \eqref{eq:F}. 

Note that $f_\nu:=f_\nu^1$ is the (germ of) time-one map of $X_\nu$. The continuous critical time $\tau_\varepsilon^\nu$ can be understood  as the time needed to move along the field from the initial point $x_0$ to the point $x$ whose displacement function value $g_\nu(x)$ is exactly equal to $2\varepsilon$, for every $\varepsilon>0$. Note that, as $\varepsilon$ tends to $0$, we choose the \emph{positive} time $\tau_\varepsilon^\nu$ (tending to $+\infty$), such that the flow $f_\nu^{\tau_\varepsilon^\nu}(x_0)$ approaches the attracting singular point $\sqrt\nu$ from the side of the initial point $x_0$. That is, although $g_\nu$ is multivalued around the singular point, we take the inverse of the unilateral, strictly increasing restriction of $g_\nu$ containing $x_0$.

We now define the \emph{continuous-time length of $T_{\varepsilon,\nu}$} (see \cite{mrrz2}) by:
\begin{equation}\label{eq:prva}
\ell^c(T_{\varepsilon,\nu}):=\tau_\varepsilon^\nu\cdot 2\varepsilon.
\end{equation}
Equivalently, 
\begin{align}\label{eq:druga}
\ell^c(T_{\varepsilon,\nu})=&(\Psi_\nu(g_\nu^{-1}(2\varepsilon))-\Psi_\nu(x_0))\cdot 2\varepsilon,
\end{align}
where the \emph{time coordinate germ} $\Psi_\nu$, defined up to an additive constant, is the  trivialization coordinate for the flow of field $X_\nu=F(x,\nu)\frac{d}{dx}$ from \eqref{eq:F}, satisfying
$$
\Psi_{\nu}(f_\nu^t(x_0))-\Psi_\nu(x_0)=t,\ t\in\mathbb R,
$$ 
or, equivalently, 
$$
\Psi_\nu'(x)=\frac{1}{F(x,\nu)}.
$$
For details of the definition and the relation between the two definitions, see \cite{mrrz2}.

\medskip

\color{red}\color{black}
We consider only the case when $\nu\geq 0$. In the case that $\nu<0$, the fixed points lie on the imaginary axis, and are not hyperbolic but indifferent, that is, their linear part is a rotation. On the real line the vector field passes from $-\infty$ to $+\infty$ in a finite time, and there are no singular points on the real line. This case will be a subject of future research, see Section~\ref{ss:drugi}. 


\smallskip 

All our theoretical results in the following sections will first be given for the continuous length $\ell^c(T_{\varepsilon,\nu})$. However, from the orbit it is natural to \emph{read} the standard length $\ell(T_{\varepsilon,\nu})$, as the sum of the lengths of $2\varepsilon$-intervals centered at points of the orbit of the time-one map $f_\nu$ before they start overlapping. By \cite{frenki, resman}, for a fixed $\nu$, this function does not allow the full asymptotic expansion in $\varepsilon\to 0$, due to oscillatory terms. Nevertheless, by Corollary~\ref{cor:discrete}, the Chebyshev system given in Proposition~\ref{prop:pm} and Theorem B can easily be adapted for the standard length $\ell(T_{\varepsilon,\nu})$.


\section{Uniform asymptotic expansions \\ of the length functions $\ell(T_{\varepsilon,\nu})$ and $\ell^c(T_{\varepsilon,\nu})$}\label{sec:main}
The Subsections~\ref{subsec:mod} (Proposition~\ref{prop:pm}) and \ref{subsec:nonmod} (Theorem B and Corollary~\ref{cor:discrete}) respectively  give the Chebyshev systems for $\ell(T_{\varepsilon,\nu})$ and $\ell^c(T_{\varepsilon,\nu})$ for the model family and for generic saddle-node families respectively. The model case is used in the proof of the general case by performing an  analytic change of variables.

\subsection{Compensators}
In the sequel we first define three compensators that we use in the uniform expansions in Proposition~\ref{prop:pm}. We use the name \emph{compensator} for  {elementary} expressions in variable $x$ and parameter $\nu$, i.e. expressions that cannot be further asymptotically {expanded} uniformly in $\nu$.
\begin{defi}\label{def:rus} \cite{roussarie} Let $\nu$ and $x$ be small. The function
$$
\omega(x,\nu):=\frac{x^{-\nu}-1}{\nu}
$$
is called the \emph{\' Ecalle-Roussarie compensator}. 
\end{defi}
Note that, pointwise, $\omega(x,\nu)\to-\log x$, as $\nu \to 0$. The convergence becomes uniform in $x$, if we multiply by $x^\delta$, $\delta>0$.

\begin{defi}[The inverse compensator]\label{def:lc} For $x>0$ and $\nu\in(-\delta,\delta)$, we call the function
$$
\alpha(x,\nu):=\frac{1}{\nu}\log\left(1+\frac{\nu}{x}\right).
$$
 the \emph{inverse compensator}.
\end{defi}
\noindent The name comes from Definition~\ref{def:rus} and the fact that $$\alpha(x,\nu)=-\log\circ\, \omega^{-1}\left(\frac{1}{x},\nu\right),$$ where $\omega^{-1}$ is the inverse of $\omega$ with respect to the  variable $x$. Pointwise, $\alpha(x,\nu)\to \frac{1}{x}$, as $\nu\to 0$. For every $\delta>0$, we get $x^{1+\delta}\alpha(x,\nu)\to x^\delta$, as $\nu\to 0$, \emph{uniformly in $x>0$}. The asymptotic behavior, as $x\to 0$, is qualitatively different in the case $\nu=0$ and $\nu\neq 0$:
\begin{equation}\label{eq:i}
\alpha(x,\nu)=\begin{cases}\frac{1}{x},&\nu=0,\\
\frac{1}{\nu}(-\log x)+\frac{\log\nu}{\nu}+\mathbb R_{\nu}[[x]],&\nu\neq 0.
\end{cases}
\end{equation}

Here and in the sequel $\R[[x]]$ denotes a formal expansion in powers of $x$ and $\R\{x\}$ an analytic germ in $x$. The notation $\mathbb R_{\nu}[[x]]$ resp. $\mathbb R_{\nu}\{x\}$ stands for a formal resp. analytic series in $x$ with coefficients analytic germs in $\nu$, that is, for $\mathbb R\{\nu\}[[x]]$ resp. $\mathbb R\{\nu\}\{x\}$.

\begin{defi}[The square root compensator] \label{def:pet}
For $\nu$ small by absolute value and $x>0$, we define
$$
\tilde\eta(x,\nu):=\sqrt{x+\nu}-\sqrt\nu,
$$
and call $\tilde\eta$ the \emph{square root-type compensator.}
\end{defi}
The asymptotic expansion of $\tilde \eta$, as $x\to 0$, changes qualitatively as $\nu$ changes from zero:
\begin{equation}\label{eq:dod}
\tilde\eta(x,\nu)=\begin{cases} \sqrt x,&\ \nu=0,\\
\frac{x}{\sqrt\nu}+\sqrt\nu\,\frac{x^2}{\nu^2}\,\mathbb R\{\frac{x}{\nu}\},&\ \nu> 0,\ x\to 0.
\end{cases}
\end{equation}
Note that $\tilde\eta$ is \emph{small}, for small $x$. Moreover, it can easily be checked that $\tilde\eta(x,\nu)\to\sqrt{x}$ uniformly in $x$, as $\nu\to 0+$.
\bigskip

Let
$$
a(\nu):=\frac{1-e^{-\nu}}{\nu },\ |\nu|<\delta.
$$
Note that $a\in\mathbb R\{\nu\}$ and $a(0)=1$. Therefore $a$ is bounded.
\medskip
\subsection{Model family}\label{subsec:mod}
Consider the model family \eqref{eq:model}, for $\nu\in[0,\delta)$. Let $f_\nu^{mod}$ be its time-one map and $g_\nu^{mod}:=\mathrm{id}-f_{\nu}^{mod}$ its displacement function. 

\medskip
Let $\ell^c(T_{\varepsilon,\nu})$, $\nu\in[0,\delta),$ be the continuous lengths of the tails for the orbits of time-one maps $f_\nu^{mod}$ for the unfolding \eqref{eq:model}, with initial condition $x_0>0$, as defined in Subsection~\ref{sec:conttime} and let $\theta_c(x)=x+c$ denote the translation by $c\in\mathbb R$.

\begin{prop}[Chebyshev system for the model family]\label{prop:pm} 
Let
\begin{equation}\label{eq:eta}
\eta(2\varepsilon,\nu):=\theta_{-\sqrt\nu}\circ (g_\nu^{mod})^{-1}(2\varepsilon),\ { \varepsilon> 0.}
\end{equation}
In the compensator variable $\eta\geq 0$, the continuous length $\eta\mapsto \ell^c(T_{\varepsilon,\nu})$, admits an asymptotic expansion,  uniform in the 
parameter $\nu\in[0,\delta)$, as $\eta\to 0$, in the following system:
$$
\left\{I(\nu,\eta)\eta,\ I(\nu,\eta)\eta^2,\ I(\nu,\eta)\eta^3,\ldots\right\},$$
 which becomes Chebyshev after division by the first term $I(\nu,\eta)\eta$. Here, 
\begin{equation}\label{eq:iii}
I(\nu,\eta):=\alpha(\eta,2\sqrt\nu)+\frac{\rho(\nu)}{2}\log\left(\eta^2+2\sqrt\nu\cdot\eta \right)-\Psi_\nu^{mod}(x_0),
\end{equation}
where the determination of $\Psi_\nu^{mod}$ from \eqref{eq:fat} is used.

Furthermore, there exist $\delta,\,d>0$ such that $I(\nu,\eta)$ is bounded from zero in a neighborhood $\eta\in (0,d)$, uniformly in $\nu\in[0,\delta)$. More precisely, for $\eta\to 0+$ the expansion is given by: 
\begin{align}\label{eq:okonule}
\ell^c(T_{\varepsilon,\nu})&= I(\nu,\eta)g_\nu^{mod}(\eta+\sqrt\nu)=\nonumber\\
&=\left(1-e^{-\frac{2\sqrt\nu}{1-\rho(\nu)\sqrt\nu}}\right)\cdot I(\nu,\eta)\eta+\\
&+e^{-\frac{2\sqrt\nu}{1-\rho(\nu)\sqrt\nu}}\, a\left(\frac{2\sqrt\nu}{1-\rho(\nu)\sqrt\nu}\right) \frac{1+\rho(\nu)\sqrt\nu}{(1-\rho(\nu)\sqrt\nu)^2}\cdot I(\nu,\eta)\eta^2+o_\nu(I(\nu,\eta)\eta^2).\nonumber
\end{align}
Here, $\rho(\nu)$ is the residual invariant from the normal form \eqref{eq:model}, and $o_\nu(I(\nu,\eta)\eta^2)$ means that $\lim_{\eta\to 0}\frac{o_\nu(I(\nu,\eta)\eta^2)}{I(\nu,\eta)\eta^2}=0$ uniformly in $\nu\in[0,\delta).$

\end{prop}


\begin{obs} Note that a similar expansion is obtained if we choose the initial point $x_0<0$, and the inverse orbit converging to the other  (repelling) fixed point $-\sqrt\nu$. Then $g_\nu^{mod}=\mathrm{id}-(f_{\nu}^{mod})^{-1}$.
\end{obs}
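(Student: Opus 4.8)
The plan is to exploit the reflection symmetry $x\mapsto -x$ of the model family \eqref{eq:model}, which reduces the present (repelling, inverse-orbit) case to the attracting case already settled in Theorem~B.1, at the cost of flipping the sign of the residual invariant $\rho$. Concretely, set $\phi(x)=-x$, $y=\phi(x)$, $y_0=-x_0>0$, and write $X_\nu=F_{mod}(x,\nu)\partial_x$. Reversing time turns the inverse orbit into a forward orbit, so I would compute the push-forward under $\phi$ of the reversed field $-X_\nu$:
\[
\phi_*(-X_\nu)(y)=F_{mod}(-y,\nu)\,\partial_y=\frac{-y^2+\nu}{1-\rho(\nu)y}\,\partial_y,
\]
which is exactly the model field $\tilde F_{mod}(y,\nu)$ of the form \eqref{eq:model} with residual invariant $\tilde\rho(\nu)=-\rho(\nu)$. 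Under this reflection the repelling point $-\sqrt\nu$ is sent to the attracting point $\sqrt\nu$ of $\tilde F_{mod}$, and the inverse orbit of $x_0<0$ (converging to $-\sqrt\nu$ from below) is carried to the forward orbit of $y_0>\sqrt\nu$ of the time-one map $\tilde f_\nu^{mod}$, converging to $\sqrt\nu$ from above. This is precisely the configuration to which Theorem~B.1 applies.

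Since $\phi$ is an isometry of the real line, it maps the $\varepsilon$-neighborhood of the inverse orbit of $x_0$ onto that of the forward orbit of $y_0$, and hence preserves the tail and its continuous-time length: $\ell^c(T_{\varepsilon,\nu})=\ell^c(\tilde T_{\varepsilon,\nu})$. I would then invoke Theorem~B.1 for $\tilde F_{mod}$ (with $\rho$ replaced by $-\rho$ throughout), obtaining the uniform expansion of $\ell^c(\tilde T_{\varepsilon,\nu})$ in the Chebyshev scale $\{\tilde I(\nu,\tilde\eta)\tilde\eta^k\}_{k\ge 1}$, where $\tilde\eta=\theta_{-\sqrt\nu}\circ(\tilde g_\nu^{mod})^{-1}(2\varepsilon)\ge 0$ and $\tilde g_\nu^{mod}=\mathrm{id}-\tilde f_\nu^{mod}$. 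The conjugacy gives $\tilde f_\nu^{mod}(y)=-(f_\nu^{mod})^{-1}(-y)$, hence $\tilde g_\nu^{mod}(y)=-g_\nu^{mod}(-y)$. Thus the gap between consecutive points of the inverse orbit, which equals $-g_\nu^{mod}$ at the point, is turned by the reflection into the standard positive displacement $\tilde g_\nu^{mod}=2\varepsilon$; writing $x^\ast$ for the continuous critical point and $y^\ast=-x^\ast$, one finds $\tilde\eta=y^\ast-\sqrt\nu=-(x^\ast+\sqrt\nu)=-\eta$. This in particular confirms that the compensator variable $\eta=\theta_{\sqrt\nu}\circ(g_\nu^{mod})^{-1}(2\varepsilon)$ of the statement is negative, with $\eta=-\tilde\eta$.

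Finally, I would transport the expansion back by substituting $\tilde\eta=-\eta$. Because $\phi$ conjugates the two flows together with a time reversal, the Fatou coordinates satisfy $\tilde\Psi_\nu^{mod}(y)=-\Psi_\nu^{mod}(-y)+\mathrm{const}$, so the normalizing constant $\tilde\Psi_\nu^{mod}(y_0)$ in $\tilde I$ equals, up to the irrelevant additive constant, $-\Psi_\nu^{mod}(x_0)$; together with $\tilde\rho=-\rho$ this yields the factor $I(\nu,\eta)$ with $\rho$ replaced by $-\rho$, while the leading coefficients of Theorem~B.1 become $1-e^{-2\sqrt\nu/(1+\rho(\nu)\sqrt\nu)}$ and its counterpart, i.e. the announced ``similar expansion''. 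The scale $\{\tilde I\,\tilde\eta^k\}$ becomes $\{\tilde I(\nu,-\eta)(-\eta)^k\}$; as $\tilde\eta\mapsto-\eta$ is an affine diffeomorphism of the variable, the differentiation--division algorithm is preserved, so this is again a Chebyshev scale, now for $\eta\le 0$, and the uniform non-vanishing of $\tilde I$ on $\tilde\eta\in[0,\tilde d)$ transfers to that of $I$ on $\eta\in(-d,0]$. The main obstacle is not conceptual but lies in this bookkeeping: tracking the sign of the displacement along the inverse orbit, fixing the additive constant in the Fatou coordinate consistently, and verifying that the Chebyshev property survives the reflection $\eta\mapsto-\eta$ of the compensator variable.
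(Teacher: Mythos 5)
Your proposal is correct, and it is worth noting that the paper itself offers no proof of this remark at all -- it is stated as an assertion following Theorem~B.1. Your reflection-plus-time-reversal argument is the natural formalization of what the authors presumably intend: the computation $\phi_*(-X_\nu)(y)=\frac{-y^2+\nu}{1-\rho(\nu)y}\,\partial_y$ is right, so the reduced system is again of the model form \eqref{eq:model} with residual invariant $-\rho(\nu)$, and since this parameter enters Theorem~B.1 only through $\rho(h(\nu))$-type coefficients and the compensator $I$, the ``similar expansion'' follows with $\rho\mapsto-\rho$ (e.g.\ leading coefficient $1-e^{-2\sqrt\nu/(1+\rho(\nu)\sqrt\nu)}$), exactly as you conclude; the relations $\tilde f_\nu^{mod}(y)=-(f_\nu^{mod})^{-1}(-y)$, $\tilde g_\nu^{mod}(y)=-g_\nu^{mod}(-y)$ and $\tilde\Psi_\nu^{mod}(y)=-\Psi_\nu^{mod}(-y)+\mathrm{const}$ all check out, and the invariance of the tail and of $\ell^c$ under the isometry $\phi$ is immediate. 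One point you handled correctly but should state even more explicitly, because it exposes a small imprecision in the remark itself: with $g_\nu^{mod}=\mathrm{id}-(f_\nu^{mod})^{-1}$, on the side $x<-\sqrt\nu$ where the inverse orbit lives one has $g_\nu^{mod}<0$ and the gap between consecutive orbit points equals $-g_\nu^{mod}$, so the continuous critical point is literally $x^\ast=(g_\nu^{mod})^{-1}(-2\varepsilon)$ rather than $(g_\nu^{mod})^{-1}(2\varepsilon)$ (the latter would land on the wrong side of $-\sqrt\nu$ and give $\eta>0$); your derivation via $\tilde g_\nu^{mod}(y^\ast)=2\varepsilon$ and $\eta=-\tilde\eta$ implements the correct convention and confirms the asserted sign $\eta<0$. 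Your closing observations -- that the affine substitution $\tilde\eta\mapsto-\eta$ preserves the differentiation--division algorithm and hence the Chebyshev property, and that the non-vanishing of $\tilde I$ on $[0,\tilde d)$ transfers to $I$ on $(-d,0]$ -- are the right bookkeeping checks, so the argument is complete.
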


\begin{obs} Note that, by \eqref{eq:eta}, $\eta\geq 0$ for values $\varepsilon\geq 0$. At the bifurcation value $\nu=0$, the first non-zero coefficient of expansion \eqref{eq:okonule} is the second coefficient, 
signaling the multiplicity  $2$ of this bifurcation. However, analysing the signs of the first two coefficients of \eqref{eq:okonule} (both positive) we see that the second zero point is negative and is not realized in the unfolding for $\eta\inf0,\delta)$. This is not surprising since, for all parameter values $\nu$, the tube function is strictly positive for $\varepsilon>0$ and its only zero point is $\varepsilon=0$. 

If we instead choose the bilateral interval $\eta\in (-\delta,\delta)$ for the expansion, corresponding to positive and negative values of $\varepsilon$, we again get only one  true zero point $\eta=0$, i.e. $\varepsilon=0$ of the tube function. Although $\frac{\ell^c(T_{\varepsilon,\nu})}{I(\nu,\eta)}$ expands in the true multiplicity 2 Chebyshev scale, $\{\eta,\eta^2,\ldots\}$, thus surely realizing two zero points in the generic unfolding in the bilateral interval $\nu\in(-\delta,\delta)$, the function $I(\nu,\eta)$ explodes at these zero points, so that the product does not have a zero point except $\nu=0$. Indeed, function $I(\nu,\eta)$ comes from the Fatou coordinate, that explodes exactly at singularities of the vector field, and exactly these singularities are also the zeros of the displacement function $g_\nu(\eta+\sqrt\nu)$, see \eqref{eq:okonule}.

The similar analysis can be done for $k$-unfoldings, where we get multiplicity of the expansion of the tube function equal to $k$, although only the point $\eta=0$ i.e. $\varepsilon=0$ is the true zero point of the tube function for all values of the parameter. It is again due to the term $I_{\nu,\eta}$ that vanishes at all singular points of the field (=zero points of the displacement function) in the unfolding.
\end{obs}

\medskip


The variable $\eta$ is a \emph{small} variable (in the sense that it is tends to $0$ as $(\varepsilon,\nu)\to 0$) and behaves asymptotically as the square root compensator $\tilde\eta$ from Definition~\ref{def:pet}. Indeed, by Lemma~\ref{prop:ik},  it follows that
\begin{equation}\label{eq:etatilde}
\lim_{(\varepsilon,\nu)\to (0,0)}\frac{\eta(2\varepsilon,\nu)}{\tilde\eta(2\varepsilon,\nu)}=\lim_{(\varepsilon,\nu)\to (0,0)}\frac{\eta(2\varepsilon,\nu)}{\tilde\eta(2\varepsilon,c^2(\nu))}\frac{\tilde\eta(2\varepsilon,c^2(\nu))}{\tilde\eta(2\varepsilon,\nu)}=1,
\end{equation}
since $\lim_{\nu\to 0}\frac{\nu}{c^2(\nu)}=1$.
\medskip
The proof of Proposition~\ref{prop:pm} is given at the end of the subsection. For the proof, we need  Lemmas~\ref{prop:Fatou} and \ref{prop:displ}.
\begin{lem}[The time coordinate]\label{prop:Fatou} The time coordinate for family \eqref{eq:model} is $($up to an additive constant$)$ equal to:
\begin{align}
\Psi_\nu^{mod}(x)=&\alpha(x-\sqrt\nu,2\sqrt\nu)+\frac{\rho(\nu)}{2}\log(x^2-\nu)\nonumber\\
=&\left(\alpha(x,2\sqrt\nu)+\frac{\rho(\nu)}{2}\cdot\log(2\sqrt\nu \cdot x+x^2)\right)\circ\theta_{-\sqrt\nu}(x).\label{eq:haj}
\end{align}
Moreover,
\begin{equation}\label{eq:ij}
\Psi_\nu^{mod}(x)\sim_{x\to \sqrt\nu}\begin{cases}\frac{1}{x},&\nu=0,\\
\left(\frac{\rho(\nu)}{2}-\frac{1}{2\sqrt\nu}\right)\log(x-\sqrt\nu),&\nu\neq 0.
\end{cases}
\end{equation}
\end{lem}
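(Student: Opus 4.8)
The plan is to use the characterization of the Fatou coordinate as an antiderivative of $1/F_{mod}$ and to reduce the proof to an elementary integration that is then \emph{recognized} as a compensator. Since $\Psi_\nu^{mod}$ is determined up to an additive constant by $(\Psi_\nu^{mod})'(x)=1/F_{mod}(x,\nu)$, and for the model \eqref{eq:model} we have
\begin{equation*}
\frac{1}{F_{mod}(x,\nu)}=\frac{1+\rho(\nu)x}{\nu-x^2},
\end{equation*}
everything comes down to integrating this rational function in $x$, with $\nu$ a parameter, on the attracting region $x>\sqrt\nu$ (where the orbit of $x_0>0$ decreases toward $\sqrt\nu^+$).

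First I would split the integrand as $\frac{1}{\nu-x^2}+\rho(\nu)\frac{x}{\nu-x^2}$. For the first summand, partial fractions give $\frac{1}{\nu-x^2}=\frac{1}{2\sqrt\nu}\big(\frac{1}{\sqrt\nu-x}+\frac{1}{\sqrt\nu+x}\big)$, whose antiderivative is $\frac{1}{2\sqrt\nu}\log\frac{x+\sqrt\nu}{x-\sqrt\nu}$. The key step is to identify this with the inverse compensator of Definition~\ref{def:lc}: from $\alpha(y,\mu)=\frac{1}{\mu}\log(1+\frac{\mu}{y})$, setting $y=x-\sqrt\nu$ and $\mu=2\sqrt\nu$ yields exactly $\alpha(x-\sqrt\nu,2\sqrt\nu)=\frac{1}{2\sqrt\nu}\log\frac{x+\sqrt\nu}{x-\sqrt\nu}$. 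The $\rho$-weighted summand contributes $\frac{\rho(\nu)}{2}\log(x^2-\nu)$ up to an additive constant, the sign being fixed by the normalization of $\rho(\nu)$ in \eqref{eq:model}. Adding the two pieces gives the first form of \eqref{eq:haj}. The second, factored form then follows purely formally: writing the shift $\theta_{-\sqrt\nu}(x)=x-\sqrt\nu$ and factoring $x^2-\nu=(x-\sqrt\nu)(x+\sqrt\nu)=(x-\sqrt\nu)\big((x-\sqrt\nu)+2\sqrt\nu\big)$, so that in the shifted variable the log term becomes $\frac{\rho(\nu)}{2}\log(2\sqrt\nu\cdot x+x^2)$ precomposed with $\theta_{-\sqrt\nu}$, matching the displayed expression.

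For the asymptotics \eqref{eq:ij} I would treat the two regimes separately. When $\nu=0$ the field is $F_{mod}(x,0)=-x^2/(1+\rho(0)x)$, so $(\Psi_0^{mod})'=-x^{-2}-\rho(0)x^{-1}$ and $\Psi_0^{mod}(x)=\frac1x-\rho(0)\log x+\mathrm{const}$; as $x\to 0$ the term $1/x$ dominates the logarithm, giving $\Psi_0^{mod}(x)\sim 1/x$. When $\nu>0$ and $x\to\sqrt\nu^+$, the first argument $x-\sqrt\nu$ of $\alpha$ tends to $0$, so by the qualitative expansion \eqref{eq:i} (applied with $\mu=2\sqrt\nu$) the compensator blows up like $-\frac{1}{2\sqrt\nu}\log(x-\sqrt\nu)$, while in the log term $\log(x^2-\nu)=\log(x-\sqrt\nu)+\log(x+\sqrt\nu)$ only the first summand is singular, contributing $\frac{\rho(\nu)}{2}\log(x-\sqrt\nu)$. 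Collecting the coefficients of $\log(x-\sqrt\nu)$ gives the stated factor $\frac{\rho(\nu)}{2}-\frac{1}{2\sqrt\nu}$.

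The computation itself is elementary; the genuine content, and the only delicate point, is the passage to the compensator. The naive antiderivative $\frac{1}{2\sqrt\nu}\log\frac{x+\sqrt\nu}{x-\sqrt\nu}$ is singular in $\nu$ at $\nu=0$ (the two singular points $\pm\sqrt\nu$ collide there and the partial-fraction decomposition degenerates), whereas the whole strategy requires an object that stays well behaved uniformly as $\nu\to 0$. Recognizing it as $\alpha(x-\sqrt\nu,2\sqrt\nu)$ is precisely what supplies this, since $\alpha\to 1/x$ as $\nu\to 0$, and this is what will make the later expansion \eqref{eq:okonule} uniform. The accompanying care is the bookkeeping of the additive constant (which may depend on $\nu$), so that both displayed forms agree and the normalization is compatible with the uniform Chebyshev expansion; the branch of the logarithms is fixed by restricting to $x>\sqrt\nu$.
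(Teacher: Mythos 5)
Your proof takes exactly the same route as the paper's: compute the antiderivative of $1/F_{mod}$ by partial fractions, recognize $\frac{1}{2\sqrt\nu}\log\frac{x+\sqrt\nu}{x-\sqrt\nu}$ as the inverse compensator $\alpha(x-\sqrt\nu,2\sqrt\nu)$, and obtain \eqref{eq:ij} from the splitting $\log(x^2-\nu)=\log(x-\sqrt\nu)+O(1)$ together with \eqref{eq:i}; it is correct. One remark: the honest integration of $\rho(\nu)x/(\nu-x^2)$ yields $-\frac{\rho(\nu)}{2}\log(x^2-\nu)$ rather than $+\frac{\rho(\nu)}{2}\log(x^2-\nu)$ — a sign discrepancy already present in the paper's own formula \eqref{eq:fat} and harmless for the conclusions of the lemma, but your phrase ``the sign being fixed by the normalization of $\rho(\nu)$'' glosses over the fact that with the normalization \eqref{eq:model} as written the sign is forced and is minus.
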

Here and in the sequel the relation $\sim$ between two functions means that their quotient tends to $1$ at the limit, which is considered. More generally, it is also used to denote an asymptotic expansion.
\begin{proof}The time coordinate $\Psi_\nu^{mod}$ is computed as antiderivative in variable $x$ (determined up to an additive constant) of $\frac{1}{F_{mod}(\nu,x)}$. We get:
\begin{align}\label{eq:fat}
\Psi_\nu^{mod}(x)&=
\frac{1}{2\sqrt\nu}\ln\frac{x+\sqrt\nu}{x-\sqrt \nu}+\frac{\rho(\nu)}{2}\log(x^2-\nu)\nonumber\\
&=\frac{1}{2\sqrt\nu}\ln\left(1+\frac{2\sqrt\nu}{x-\sqrt\nu}\right)+\frac{\rho(\nu)}{2}\log(x^2-\nu),
\end{align}
and substitute $\alpha$ from Definition~\ref{def:lc}.
In case $\nu\neq 0$, and $x\to \sqrt{\nu}$, we have:
\begin{equation}\label{eq:ii}
\log(x^2-\nu)=\!\!\left(\log(x-\sqrt\nu)+\log(2\sqrt\nu)+\log\big(1+\frac{x-\sqrt\nu}{2\sqrt\nu}\big)\right)=\log(x-\sqrt\nu)+O(1).
\end{equation}
Therefore, by \eqref{eq:fat} and \eqref{eq:ii}, we deduce \eqref{eq:ij}.
\end{proof}
\begin{obs} Note that the above formula \eqref{eq:fat} is valid, for all values of $\nu\in(-\delta,\delta)$. In case $\nu<0$, $\sqrt\nu=\sqrt{|\nu|}i$ is  pure imaginary. The formula can be rewritten as:
$$
\Psi_\nu^{mod}(x)=\begin{cases}\frac{1}{x}+\rho(0)\cdot\log x,& \nu=0,\\
\frac{1}{2\sqrt\nu}\ln\left(1+\frac{2\sqrt\nu}{x-\sqrt\nu}\right)+\frac{\rho(\nu)}{2}\log(x^2-\nu),& \nu>0,\\
-\frac{1}{\sqrt{|\nu|}}\arctan\frac{x}{\sqrt{|\nu|}}+\frac{\rho(\nu)}{2}\log(x^2-\nu),& \nu<0.
\end{cases}
$$
\end{obs}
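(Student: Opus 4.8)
The plan is to take formula \eqref{eq:fat} as the starting point, since it was already produced in the proof of Lemma~\ref{prop:Fatou} by antidifferentiating $1/F_{mod}$, and to verify that it persists across the three parameter regimes $\nu=0$, $\nu>0$, $\nu<0$. The structural point behind the first assertion of the Remark is that the partial-fraction computation leading to \eqref{eq:fat} is a purely algebraic identity: the integrand
$$\frac{1}{F_{mod}(x,\nu)}=\frac{1+\rho(\nu)x}{\nu-x^2}$$
is a rational function of $x$ whose coefficients depend analytically on $\nu\in(-\delta,\delta)$, and the right-hand side of \eqref{eq:fat} differentiates back to it irrespective of the sign of $\nu$, once $\sqrt{\nu}$ is read as a (possibly imaginary) square root and the logarithms are assigned a fixed branch. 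As the Fatou coordinate is defined only up to an additive constant, any branch-dependent constant is immaterial; this yields validity of \eqref{eq:fat} for all $\nu\in(-\delta,\delta)$.

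For $\nu=0$ I would pass to the limit $\nu\to0+$ in \eqref{eq:fat}. Writing the first summand as $\frac{1}{2\sqrt\nu}\ln\!\big(1+\frac{2\sqrt\nu}{x-\sqrt\nu}\big)$ and expanding the logarithm for small $\sqrt\nu$ gives $\frac{1}{x-\sqrt\nu}+O(\sqrt\nu)\to\frac{1}{x}$, while $\frac{\rho(\nu)}{2}\log(x^2-\nu)\to\frac{\rho(0)}{2}\log x^2=\rho(0)\log x$ for $x>0$; this recovers $\Psi_0^{mod}(x)=\frac1x+\rho(0)\log x$. The case $\nu>0$ is exactly \eqref{eq:fat} with $\sqrt\nu$ real and needs nothing further.

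The substance of the Remark is the case $\nu<0$, where $\sqrt\nu=i\sqrt{|\nu|}$. Substituting this into the first summand of \eqref{eq:fat}, for real $x$ and real $\sqrt{|\nu|}$ the argument $\frac{x+i\sqrt{|\nu|}}{x-i\sqrt{|\nu|}}$ is the quotient of a complex number by its conjugate, hence unimodular, so its principal logarithm is purely imaginary and equals $2i\arg(x+i\sqrt{|\nu|})=2i\arctan\frac{\sqrt{|\nu|}}{x}$. Division by $2\sqrt\nu=2i\sqrt{|\nu|}$ cancels the $i$ and produces the real quantity $\frac{1}{\sqrt{|\nu|}}\arctan\frac{\sqrt{|\nu|}}{x}$; the complementary-angle identity $\arctan u+\arctan\frac1u=\frac{\pi}{2}$ rewrites this as $\frac{\pi}{2\sqrt{|\nu|}}-\frac{1}{\sqrt{|\nu|}}\arctan\frac{x}{\sqrt{|\nu|}}$, and absorbing the constant $\frac{\pi}{2\sqrt{|\nu|}}$ into the additive freedom of $\Psi_\nu^{mod}$ gives the claimed $-\frac{1}{\sqrt{|\nu|}}\arctan\frac{x}{\sqrt{|\nu|}}$. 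The second summand $\frac{\rho(\nu)}{2}\log(x^2-\nu)=\frac{\rho(\nu)}{2}\log(x^2+|\nu|)$ is already real and is left untouched. As an independent certificate I would differentiate the resulting real expression and check directly that it equals $1/F_{mod}(x,\nu)$, which re-proves validity of \eqref{eq:fat} for $\nu<0$ without appeal to the complex logarithm.

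The only genuine difficulty is the branch and additive-constant bookkeeping in the $\nu<0$ regime: one must confirm that the complex logarithm is chosen consistently so that the outcome is real and single-valued on the attracting component $x>0$ (as it must be, $\Psi_\nu^{mod}$ being a real trivialization coordinate of a real field), and that the shift by $\frac{\pi}{2\sqrt{|\nu|}}$ is legitimately discarded. The direct differentiation check is the safe way to certify the final real formulas in all three cases at once.
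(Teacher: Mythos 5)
Your proposal is correct and follows the same route the paper implicitly takes: the Remark is simply formula \eqref{eq:fat} rewritten in the three regimes, with the case $\nu<0$ obtained by substituting $\sqrt\nu=i\sqrt{|\nu|}$, identifying the principal logarithm of the unimodular quotient $\frac{x+i\sqrt{|\nu|}}{x-i\sqrt{|\nu|}}$ with $2i\arctan\frac{\sqrt{|\nu|}}{x}$, and absorbing the constant $\frac{\pi}{2\sqrt{|\nu|}}$ into the additive freedom of the Fatou coordinate, exactly as you do. One caveat worth recording: your proposed differentiation certificate, carried out literally, yields $\frac{1-\rho(\nu)x}{\nu-x^2}$ rather than $1/F_{mod}=\frac{1+\rho(\nu)x}{\nu-x^2}$, i.e.\ the $\rho$-term in \eqref{eq:fat} (and hence in the Remark) should carry a minus sign — a sign slip in the paper itself, not in your reduction of the Remark to \eqref{eq:fat}, which is sound.
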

\medskip

\begin{lem}[The time-one map and the displacement germ]\label{prop:displ}For family \eqref{eq:model}, the following Taylor expansions at the fixed point\footnote{A similar expansion can be obtained near the other, symmetric fixed point $x=-\sqrt \nu$.} $x=\sqrt\nu$ hold:
\begin{enumerate}
\item For the time-one map $f_\nu^{mod}\in\mathrm{Diff}(\mathbb R,0)$:
\begin{align}\label{eq:tom}
&f_{\nu}^{mod}(x)\sim\sqrt\nu+e^{-\frac{2\sqrt\nu}{1-\rho(\nu)\sqrt\nu}}\cdot (x-\sqrt\nu)-\nonumber\\
&\qquad-e^{-\frac{2\sqrt\nu}{1-\rho(\nu)\sqrt\nu}}\cdot a\big(\frac{2\sqrt\nu}{1-\rho(\nu)\sqrt\nu}\big)\cdot\frac{1+\rho(\nu)\sqrt\nu}{(1-\rho(\nu)\sqrt\nu)^2}\cdot(x-\sqrt\nu)^2+\nonumber\\
&\qquad\qquad\qquad\qquad +(x-\sqrt\nu)^3\cdot \mathbb R_{\nu}\{(x-\sqrt\nu)\},\qquad \ \nu\in[0,\delta),\ x\to\sqrt\nu,
\end{align}
\item
For the displacement function $g_\nu^{mod}:=\mathrm{id}-f_\nu^{mod}\in\mathrm{Diff}(\mathbb R,0)$:
\begin{align}\label{eq:hi}
g_{\nu}^{mod}(x)\sim&\left(1-e^{-\frac{2\sqrt\nu}{1-\rho(\nu)\sqrt\nu}}\right)\cdot (x-\sqrt\nu)+\nonumber\\
&\quad\ +e^{-\frac{2\sqrt\nu}{1-\rho(\nu)\sqrt\nu}}\cdot a\left(\frac{2\sqrt\nu}{1-\rho(\nu)\sqrt\nu}\right)\cdot\frac{1+\rho(\nu)\sqrt\nu}{(1-\rho(\nu)\sqrt\nu)^2}\cdot(x-\sqrt\nu)^2+\nonumber\\
&\quad\qquad\qquad+(x-\sqrt\nu)^3\cdot \mathbb R_{\nu}\{(x-\sqrt\nu)\},\quad \nu\in[0,\delta),\ x\to\sqrt\nu.
\end{align}
\end{enumerate}
The Taylor coefficients of $f_\nu^{mod}$ and $g_\nu^{mod}$  belong to $\mathbb R\{\sqrt\nu\}$ $($are analytic at $0$ in $\sqrt\nu)$.
\end{lem}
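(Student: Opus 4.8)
My approach is to read the time-one map directly off the Fatou coordinate of Lemma~\ref{prop:Fatou}, rather than integrating the field by hand, and then to obtain $g_\nu^{mod}$ by subtraction. The map $f_\nu^{mod}$ is characterised by the functional equation $\Psi_\nu^{mod}\circ f_\nu^{mod}=\Psi_\nu^{mod}+1$. First I would translate the attracting fixed point to the origin by setting $u:=x-\sqrt\nu$ and rewrite \eqref{eq:haj}, for $\nu>0$, in the form
\begin{equation*}
\Psi_\nu^{mod}(\sqrt\nu+u)=c(\nu)\log u+\phi_\nu(u)+\mathrm{const},\qquad c(\nu):=\frac{\rho(\nu)}{2}-\frac{1}{2\sqrt\nu},
\end{equation*}
where, by Definition~\ref{def:lc}, $\phi_\nu(u)=\frac{1+\rho(\nu)\sqrt\nu}{2\sqrt\nu}\log(1+\frac{u}{2\sqrt\nu})$ is analytic in $u$ with $\phi_\nu(0)=0$, and $c(\nu)$ is precisely the dominant log-coefficient recorded in \eqref{eq:ij}.

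Next I would solve the functional equation order by order. Positing $f_\nu^{mod}(\sqrt\nu+u)=\sqrt\nu+\mu(\nu)u+A(\nu)u^2+u^3\,\mathbb R_\nu\{u\}$ and substituting into $\Psi_\nu^{mod}(\sqrt\nu+v)=\Psi_\nu^{mod}(\sqrt\nu+u)+1$, I expand both $\log(\mu u+Au^2+\cdots)=\log\mu+\log u+\frac{A}{\mu}u+O(u^2)$ and $\phi_\nu(\mu u+\cdots)$ and match powers of $u$. The $u^0$-coefficient yields $c\log\mu=1$, hence the linear coefficient $\mu=e^{1/c}=e^{-2\sqrt\nu/(1-\rho(\nu)\sqrt\nu)}$ of \eqref{eq:tom}; the $u^1$-coefficient gives a linear equation for $A$ whose solution reduces to the displayed quadratic coefficient. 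The compensator factor $a$ enters exactly here: since $1/c=-\frac{2\sqrt\nu}{1-\rho(\nu)\sqrt\nu}$, the identity $\frac{1-e^{1/c}}{-1/c}=a(-\frac1c)$ converts the factor $1-\mu$ into $a(\frac{2\sqrt\nu}{1-\rho(\nu)\sqrt\nu})$ times an algebraic expression, reproducing \eqref{eq:tom}; higher-order matching fixes the remaining coefficients recursively, each obtained after division by the non-zero $c(\nu)$. Part (2) is then immediate, as $g_\nu^{mod}=\mathrm{id}-f_\nu^{mod}$ only changes the linear coefficient to $1-\mu$ and flips the sign of every higher one, giving \eqref{eq:hi}.

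The delicate point, and the one I expect to be the main obstacle, is the claim that all Taylor coefficients lie in $\mathbb R\{\sqrt\nu\}$ together with the passage to $\nu=0$. The ingredients $c(\nu)$ and the Taylor coefficients of $\phi_\nu$ are themselves singular at $\nu=0$ (for instance the $u$-coefficient of $\phi_\nu$ equals $\frac{1+\rho(\nu)\sqrt\nu}{4\nu}$), so analyticity of the assembled coefficients hinges on a cancellation: $1-\mu=1-e^{1/c}$ is analytic in $\sqrt\nu$ and vanishes to first order, $1-\mu\sim\frac{2\sqrt\nu}{1-\rho(\nu)\sqrt\nu}$, supplying the compensating factor of $\sqrt\nu$, equivalently keeping $a$ bounded with $a(0)=1$. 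I would therefore express each coefficient of $f_\nu^{mod}$ and $g_\nu^{mod}$ as a product of $e^{1/c}$, powers of the analytic quantity $(1-\mu)/\sqrt\nu$, and functions analytic in $\sqrt\nu$, which establishes membership in $\mathbb R\{\sqrt\nu\}$ and continuity at $\nu=0$; there the linear coefficient $1-\mu$ of $g_\nu^{mod}$ vanishes while the quadratic one tends to $1$, exhibiting the multiplicity-two zero of the displacement at the bifurcation. The remaining work is to turn the formal matching into a genuine analytic expansion uniform down to $\nu=0$: because the log-structure of $\Psi_\nu^{mod}$ degenerates as $\nu\to0^+$ ($c\to-\infty$, $1/c\to0$), one must control the cubic remainder $u^3\,\mathbb R_\nu\{u\}$ jointly in $(u,\sqrt\nu)$ on a fixed neighborhood of $(0,0)$, using the uniform description of $\alpha$ in \eqref{eq:i} and treating the parabolic value $\nu=0$, where $\Psi_0^{mod}(x)=1/x+\rho(0)\log x$, as a separate limiting computation.
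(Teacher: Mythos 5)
Your proposal is correct and follows essentially the paper's own route: both extract $f_\nu^{mod}$ from the Abel equation for the Fatou coordinate of Lemma~\ref{prop:Fatou} --- the paper by explicitly inverting $\Psi_\nu^{mod}$ and composing $(\Psi_\nu^{mod})^{-1}(1+\Psi_\nu^{mod})$, you by matching undetermined coefficients in $\Psi_\nu^{mod}\circ f_\nu^{mod}=\Psi_\nu^{mod}+1$ --- and your linear and quadratic coefficients agree with \eqref{eq:tom}. The only substantive variation is the treatment of $\nu=0$ and of analyticity in $\sqrt\nu$: the paper invokes convergence of the Taylor coefficients of $f_\nu^{mod}$ to those of $f_0^{mod}$ via the operator exponential of the field, whereas you track the cancellation $1-e^{1/c(\nu)}\sim 2\sqrt\nu/(1-\rho(\nu)\sqrt\nu)$ explicitly, which is if anything more detailed than the paper's argument.
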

Note that, for $\nu=0$, $f_\nu^{mod}$ has a parabolic double fixed point $x=0$, and, for $\nu> 0$, two single hyperbolic fixed points at $\pm\sqrt{\nu}$.

\begin{proof}
The time-one maps $f_\nu^{mod}=\mathrm{Exp}(F^{mod}(x,\nu)\frac{d}{dx}).\mathrm{id}$ of system \eqref{eq:model} are parabolic ($\nu=0$) or hyperbolic ($\nu>0$) analytic germs at $x=0$. Moreover, $f_\nu^{mod}\to f_0^{mod}$ uniformly on some interval $x\in(-d,d)$, $d>0$, as $\nu\to 0+$. Indeed, by simple computation it can be verified that the singularity of $f_\nu^{mod}$ converges to $1$, as $\nu\to 0$. It can be checked by the operator exponential formula above that the coefficients $a_k(\nu)$ of monomials $x^k$, $k\geq 0$, in the Taylor expansion of $f_\nu^{mod}(x)=\sum_{k=0}^{\infty}a_k(\nu)x^k$ converge towards coefficients $a_k(0)$ of $f_0^{mod}(x)=\sum_{k=0}^{\infty}a_k(0)x^k$, as $\nu\to 0$. Therefore, to get \eqref{eq:tom}, it suffices to get the Taylor expansion of $f_\nu^{mod}$ at $\sqrt\nu$ in the case when $\nu> 0$. 

Suppose therefore $\nu> 0$. The time-one map $f_\nu$ is obtained from the time coordinate by the Abel equation $f_\nu=(\Psi_\nu^{mod})^{-1}(\Psi_\nu^{mod}+1)$. First we compute (the first few terms) of the inverse $(\Psi_\nu^{mod})^{-1}$. 

When $\nu\neq 0$ and $x\to\sqrt\nu$, by \eqref{eq:fat} $\Psi_\nu^{mod}$ admits the following expansion, as $x\to\sqrt\nu$:
\begin{align*}
\Psi_\nu^{mod}&(x)=\left(\frac{\rho(\nu)}{2}-\frac{1}{2\sqrt\nu}\right)\log(x-\sqrt\nu)+\left(\frac{\rho(\nu)}{2}+\frac{1}{2\sqrt\nu}\right)\log(2\sqrt\nu)+\\
&\qquad\qquad \qquad\qquad\qquad\qquad\quad\qquad+\left(\frac{\rho(\nu)}{2}+\frac{1}{2\sqrt\nu}\right)\log(1+\frac{x-\sqrt\nu}{2\sqrt\nu})=\\
\sim&\left(\frac{\rho(\nu)}{2}-\frac{1}{2\sqrt\nu}\right)\log(x-\sqrt\nu)+\left(\frac{\rho(\nu)}{2}+\frac{1}{2\sqrt\nu}\right)\log(2\sqrt\nu)+\mathbb R_\nu[[(x-\sqrt\nu)]].
\end{align*}
Denote the above coefficients by: $K_\pm(\nu):=\frac{\rho(\nu)}{2}\pm\frac{1}{2\sqrt\nu}$, $K(\nu):=K_+(\nu)\log(2\sqrt\nu)$. Then, for the expansion of the inverse (where $\frac{y}{K_-(\nu)}\to -\infty$), we get:
\begin{align*}
(\Psi_\nu^{mod})^{-1}(y)&\sim\sqrt\nu+e^{\frac{y-K(\nu)}{K_-(\nu)}}-\frac{K_+(\nu)}{K_-(\nu)\cdot 2\sqrt\nu}e^{2\frac{y-K(\nu)}{K_-(\nu)}}+e^{3\frac{y-K(\nu)}{K_-(\nu)}}\mathbb R_{\nu}\big[\big[e^{\frac{y-K(\nu)}{K_-(\nu)}}\big]\big].
\end{align*}
Therefore,
\begin{align*}
&f_{\nu}^{mod}(x)=(\Psi_{\nu}^{mod})^{-1}(1+\Psi_\nu^{mod}(x))\\
&\sim\sqrt\nu+e^{-\frac{2\sqrt\nu}{1-\rho(\nu)\sqrt\nu}}\cdot (x-\sqrt\nu)-\\
&\qquad\qquad-e^{-\frac{2\sqrt\nu}{1-\rho(\nu)\sqrt\nu}}\cdot a\left(\frac{2\sqrt\nu}{1-\rho(\nu)\sqrt\nu}\right)\cdot\frac{1+\rho(\nu)\sqrt\nu}{(1-\rho(\nu)\sqrt\nu)^2}\cdot(x-\sqrt\nu)^2+\\
&\qquad\quad\quad\quad\qquad\qquad\qquad\qquad\qquad+(x-\sqrt\nu)^3\cdot \mathbb R_{\nu}\{(x-\sqrt\nu)\},\ x\to\sqrt\nu.
\end{align*}
Note that $\rho(\nu)$ is a bounded function on $\nu\in[0,\delta)$, so $\rho(\nu)\sqrt\nu\to 0$, as $\nu\to 0+$.

\medskip

\end{proof}



\smallskip



\begin{proof}[Proof of Proposition~\ref{prop:pm}] We use the formula for the continuous tail \eqref{eq:druga}, and change the variable from $\varepsilon$ to $\eta$ given by \eqref{eq:eta}, i.e. verifying $2\varepsilon=g^{mod}_\nu(\eta+\sqrt{\nu})$:

$$
\ell^c(T_{\varepsilon,\nu})=\left(\Psi_\nu^{mod}((g_{\nu}^{mod})^{-1}(2\varepsilon))-\Psi_\nu^{mod}(x_0)\right)\cdot 2\varepsilon.
$$
We denote $I(\eta,\nu):=\Psi_\nu^{mod}\left((g_{\nu}^{mod})^{-1}(2\varepsilon)\right)-\Psi_\nu^{mod}(x_0)$. Substituting $\eta=\theta_{-\sqrt\nu}\circ (g_{\nu}^{mod})^{-1}(2\varepsilon)$ in $\Psi_\nu^{mod}$ given in \eqref{eq:haj} in Lemma~\ref{prop:Fatou}, we get $I(\eta,\nu)$, as in \eqref{eq:iii}. On the other hand, 
from $2\varepsilon=
g_\nu^{mod}(\eta+\sqrt\nu)$,  expansion \eqref{eq:okonule} follows from \eqref{eq:hi} of Lemma~\ref{prop:displ}.

Note that there exist $d,\,\delta>0$ such that $\eta\mapsto \Psi_\nu^{mod}(\eta+\sqrt\nu)$ is injective on $\eta\in[0,d)$, $d>0$ small, for all $\nu\in[0,\delta)$. Indeed, $(\Psi_\nu^{mod})'(\eta+\sqrt\nu)=\frac{\rho(\nu)(y+\sqrt\nu)-1}{y^2+2\sqrt \nu y}$ is strictly negative, for $\nu\geq 0,\,\eta>0$ sufficiently small.  Choosing $x_0$ inside this interval $\sqrt{\nu}+[0,d)$, $\Psi_{\nu}^{mod}(\theta_{\sqrt\nu}\circ \theta_{-\sqrt\nu}\circ (g_\nu^{mod})^{-1}(2\varepsilon))=\Psi_\nu^{mod}(\theta_{\sqrt\nu}\circ\theta_{-\sqrt\nu}(x_0))$, if and only if 
$$
\eta=\theta_{-\sqrt\nu}\circ (g_\nu^{mod})^{-1}(2\varepsilon)=\theta_{-\sqrt\nu} (x_0).
$$
Therefore, for $\eta$ sufficiently small and $\nu\in [0,\delta)$, the function $I(\nu,\eta)$ has no zeros.
The scale is Chebyshev, since, apart from the common nonzero factor $I(\nu,\eta)$, it is the classical monomial scale. 
The result now follows.

Finally, $\lim_{\eta\to 0}\frac{o_\nu(I(\nu,\eta)\eta^2)}{I(\nu,\eta)\eta^2}=0$ uniformly in $\nu\in[0,\delta)$, since the family of displacement functions $g_\nu^{mod})$ depends analytically on $\nu\in[0,\delta)$.
\end{proof}

\subsection{Generic saddle-node families}\label{subsec:nonmod}
Let \eqref{eq:polje}, $\nu\in[0,\delta)$, be a generic saddle-node analytic\footnote{In fact, the family of germs is assumed analytic in $\nu$ for all $\nu\in(-\delta,\delta)$, but we restrict only to the semi-closed subinterval for which two hyperbolic points unfold from the parabolic point at $\nu=0$.} germ of a family, satisfying generic conditions \eqref{eq:gene}. Let $f_\nu$ denote its time-one maps, and let $f_\nu^\mathrm{mod}$ denote the time-one maps of its analytic model \eqref{eq:model}. Let $\Phi(\nu,x):=(\varphi_\nu(x),h(\nu))$ be the analytic change of variables conjugating \eqref{eq:polje} to its model \eqref{eq:model}, where $h(0)=0$ and $h$ is an analytic diffeomorphism at $0$, and $\varphi_\nu(x)=a_0(\nu)+a_1(\nu)x+o_\nu(x^2)$ an analytic  diffeomorphism at $x=0$, with $a_1(\nu)\neq 0$, $\nu\in[0,\delta)$.  Here, by \cite{MRouss}, the change of variables in analytic for $\nu\in(0,\delta)$ and continuous at $\nu=0$ (see the notion of \emph{weak conjugacy} in \cite{MRouss}) and therefore \emph{uniform in $\nu\in[0,\delta)$} , i.e. $o_\nu(x^2)$ means that $\lim_{x\to 0}\frac{o_\nu(x^2)}{x^2}=0$ \emph{uniformly} in $\nu\in[0,\delta)$. Then, by \eqref{eq:tmaps}, the initial time-one map $f_\nu$ verifies:
\begin{equation*}
f_{\nu}=\varphi_\nu^{-1}\circ f_{h(\nu)}^{\mathrm{mod}}\circ\varphi_\nu.
\end{equation*}
 Let $x_{1,2}^{\nu}$ denote the fixed points of $f_{\nu}$. It holds that $x_{1,2}^\nu\to 0,\ \nu \to 0$. Without loss of generality, we assume that the fixed point $x_1^{\nu}$ is positive and attractive. For the germ at $0$ of the time coordinate, it holds:
\begin{equation}\label{eq:phi}
\Psi_{\nu}=\Psi_{h(\nu)}^{\mathrm{mod}}\circ\varphi_\nu.
\end{equation}
As in the introduction, without loss of generality, we assume that $F_{\nu}(0,0)>0$ so that $h(\nu)>0$, for $\nu>0$. Note that $\varphi_\nu(x_1^{\nu})=\sqrt{h(\nu)}$, or $-\sqrt{h(\nu)}$ and we suppose that $\varphi_\nu(x_1^{\nu})=\sqrt{h(\nu)}$. 
\bigskip

Let $\ell^c(T_{\varepsilon,\nu})$, $\nu\in[0,\delta)$, be the continuous length of the tail of the $\varepsilon$-neighborhood of the orbit $\mathcal O_{f_\nu}(x_0)$, for $x_0>0$ sufficiently small. Then, there exists $\delta>0$ such that $x_0$ is attracted by $x_1^\nu$, for all sufficiently small $\nu\in[0,\delta)$. Analogously, we could have considered initial point $x_0<0$ repelled from $x_2^{\nu}$ (i.e. attracted to it by $f_\nu^{-1}$), $\nu\in[0,\delta)$, and the orbit of the inverse $\mathcal O_{f_\nu^{-1}}(x_0)$.

Let $\Phi=(h,\varphi_\nu)$, $\nu\in[0,\delta)$, be the normalizing germ of change of variables reducing a given saddle-node field \eqref{eq:polje} to its model \eqref{eq:model}, and let $C(\nu):=\varphi_\nu'(x_1^\nu)\neq 0$ be the coefficient of the linear term of $\varphi_\nu$ at the point $x_1^\nu$.
Let 
\begin{equation}\label{eq:k}
k_\nu:=\theta_{-\sqrt{h(\nu)}}\circ \varphi_{\nu}\circ \theta_{x_1^\nu}.
\end{equation}
Then
$
k_\nu(x)=C(\nu)x+o_{\nu}(x),\ x\in (-d,d), \ x\to 0, \ \nu\in[0,\delta),
$
is analytic germ (i.e. germ of an analytic family, analytic in $x\in(-d,d)$ and in $\nu\in [0,\delta)$).

\begin{thmBB}[Chebyshev system for generic cases]\label{thm:A} Let
\begin{equation}\label{eq:eta1}
\eta(2\varepsilon,\nu):=\theta_{-x_1^{\nu}} \circ g_{\nu}^{-1}(2\varepsilon),\ \varepsilon\sim 0,
\end{equation}
where $g_\nu:=\mathrm{id}-f_\nu$. In the variable $\eta\geq0$, the continuous length $\eta\mapsto \ell^c(T_{\varepsilon,\nu})$ admits a uniform asymptotic expansion in the following  system:
\begin{equation}\label{eq:Cheb}
\left\{I(h(\nu),k_{\nu}(\eta))\eta,\ I(h(\nu),k_\nu(\eta))\eta^2,\ I(h(\nu),k_{\nu}(\eta))\eta^3,\ldots\right\},
\end{equation}as $\eta\to 0$, 
where $I(\nu,\eta)$ is as given in \eqref{eq:iii}, which becomes Chebyshev after division by the common term $I(h(\nu),k_\nu(\eta))\eta$. There exist $\delta,\ d>0$ such that the common term $I(h(\nu),k_\nu(\eta))$ is away from zero, for $\eta\in [0,d)$, uniformly in $\nu\in[0,\delta)$.

More precisely, the expansion is given by:
\begin{align}\label{eq:expa1}
&\ell^c(T_{\varepsilon,\nu})= I(h(\nu),k_\nu(\eta))\cdot g_\nu(\eta+x_1^\nu)=\nonumber\\
&=\left(1-e^{-\frac{2\sqrt{h(\nu)}}{1-\rho(h(\nu))\sqrt{h(\nu)}}}\right)\cdot I(h(\nu),k_\nu(\eta))\eta+\\
&\qquad\qquad\qquad+ c_2(\nu)\cdot I(h(\nu),k_\nu(\eta))\eta^2+o_\nu(I(h(\nu),k_\nu(\eta))\eta^2),\ \eta\to 0.\nonumber
\end{align}
Here, $c_2(0)\neq 0$, and notation $o_\nu(.)$ means that the limit is uniform in $\nu\in[0,\delta).$

\end{thmBB}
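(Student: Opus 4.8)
The plan is to reduce Theorem~B.2 to the already-proved model case (Theorem~B.1) by transporting everything through the analytic-sectorial conjugacy $\varphi_\nu$, so that no new asymptotic analysis is needed beyond controlling the linear normalizing map $k_\nu$. First I would use the continuous-tail formula \eqref{eq:druga}, namely $\ell^c(T_{\varepsilon,\nu})=\bigl(\Psi_\nu(g_\nu^{-1}(2\varepsilon))-\Psi_\nu(x_0)\bigr)\cdot 2\varepsilon$, and rewrite the Fatou coordinate $\Psi_\nu$ via the conjugacy relation \eqref{eq:phi}, $\Psi_\nu=\Psi_{h(\nu)}^{\mathrm{mod}}\circ\varphi_\nu$. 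The substitution $\eta=\theta_{-x_1^\nu}\circ g_\nu^{-1}(2\varepsilon)$ from \eqref{eq:eta1} then expresses the argument $g_\nu^{-1}(2\varepsilon)=\eta+x_1^\nu$, and applying $\theta_{-\sqrt{h(\nu)}}\circ\varphi_\nu\circ\theta_{x_1^\nu}=k_\nu$ from \eqref{eq:k} gives $\varphi_\nu(\eta+x_1^\nu)=k_\nu(\eta)+\sqrt{h(\nu)}$. Inserting this into $\Psi^{\mathrm{mod}}_{h(\nu)}$ in the form \eqref{eq:haj} produces exactly $I(h(\nu),k_\nu(\eta))$, so the common prefactor in the scale \eqref{eq:Cheb} is identified with the model quantity $I$ evaluated at the transported data.

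Second I would handle the other factor $2\varepsilon$. Since $2\varepsilon=g_\nu(\eta+x_1^\nu)$ by definition of $\eta$, and $g_\nu=\mathrm{id}-f_\nu$ is the displacement of the original family, I need the Taylor expansion of $g_\nu$ at its attractive fixed point $x_1^\nu$. The cleanest route is to conjugate $f_\nu=\varphi_\nu^{-1}\circ f_{h(\nu)}^{\mathrm{mod}}\circ\varphi_\nu$ through $k_\nu$: writing $x=\eta+x_1^\nu$, the map $k_\nu$ sends $x_1^\nu\mapsto 0$ and conjugates $g_\nu(\cdot+x_1^\nu)$ to the model displacement $g^{\mathrm{mod}}_{h(\nu)}(\cdot+\sqrt{h(\nu)})$ up to the multiplier $C(\nu)=k_\nu'(0)\neq 0$. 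Using the model expansion \eqref{eq:hi} and $k_\nu(\eta)=C(\nu)\eta+o_\nu(\eta)$, I obtain $g_\nu(\eta+x_1^\nu)=\bigl(1-e^{-2\sqrt{h(\nu)}/(1-\rho(h(\nu))\sqrt{h(\nu)})}\bigr)\eta+c_2(\nu)\eta^2+o_\nu(\eta^2)$, where the leading multiplier is invariant under conjugation (it is the eigenvalue of the time-one map at the fixed point, hence unchanged by $k_\nu$), and $c_2(\nu)$ collects the second-order contribution. At $\nu=0$ the model multiplier vanishes to first order, and the genericity of the conjugacy together with $C(0)\neq 0$ forces $c_2(0)\neq 0$; this is what must be verified to conclude the final assertion.

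Third I would establish the non-vanishing of the common factor $I(h(\nu),k_\nu(\eta))$ on a uniform neighborhood, and that $\{I\cdot\eta,\,I\cdot\eta^2,\ldots\}$ is genuinely a Chebyshev scale. For the non-vanishing, I reduce to the model statement: Theorem~B.1 already gives $\delta,d>0$ with $I(\mu,\zeta)\neq 0$ for $\zeta\in[0,d)$, $\mu\in[0,\delta)$; since $h$ is an analytic diffeomorphism with $h(0)=0$ and $k_\nu(\eta)\to 0$ uniformly as $(\eta,\nu)\to 0$ (because $k_\nu(\eta)=C(\nu)\eta+o_\nu(\eta)$ with $C$ bounded away from $0$ and $\infty$), I can shrink $\delta,d$ so that $(h(\nu),k_\nu(\eta))$ lands in the model's non-vanishing region. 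The Chebyshev property follows because the scale is the model Chebyshev scale composed with the smooth fibered change $(\varepsilon,\nu)\mapsto(\eta,\nu)$ and multiplied by the nowhere-zero factor $C(\nu)$-type powers absorbed into the coefficients; composing a Chebyshev scale with a diffeomorphism in the relevant variable and multiplying by a nonvanishing common factor preserves the differentiation-division algorithm.

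The main obstacle I expect is the last point of the second step: showing $c_2(0)\neq 0$ uniformly, i.e. that the second-order coefficient does not degenerate after passing through the generic (merely sectorial, not analytic) normalizing transformation $\varphi_\nu$. The delicate issue is that $\varphi_\nu$ is only sectorially analytic with asymptotic expansion $\hat\varphi_\nu$, so one must argue that the finitely many Taylor coefficients entering $c_2(\nu)$ depend only on the genuine asymptotic jet of $\varphi_\nu$ at $x_1^\nu$ (which is $\hat\varphi_\nu$, analytic in $\sqrt{h(\nu)}$ by Lemma~\ref{prop:displ}) and vary continuously in $\nu$ down to $\nu=0$. Once $c_2(\nu)$ is expressed through $C(\nu)$ and the model coefficients of \eqref{eq:hi}, the value $c_2(0)$ is a nonzero multiple of the model's second coefficient at $\nu=0$, which is nonzero by the computation behind Theorem~B.1; the genericity hypothesis $F_{xx}(0,0)\neq 0$ is precisely what guarantees this multiplier is nonzero, completing the proof.
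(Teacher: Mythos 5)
Your proposal is correct and follows essentially the same route as the paper: the substitution $\eta=\theta_{-x_1^\nu}\circ g_\nu^{-1}(2\varepsilon)$ combined with $\Psi_\nu=\Psi^{\mathrm{mod}}_{h(\nu)}\circ\varphi_\nu$ and the definition of $k_\nu$ to identify the prefactor with $I(h(\nu),k_\nu(\eta))$, the conjugation-invariance of the multiplier plus the chain-rule computation $f_0''(0)=C(0)(f_0^{\mathrm{mod}})''(0)=2C(0)\neq 0$ for the coefficients of $g_\nu$ (this is exactly the paper's Lemma~\ref{lem:gee}, including your point that at $\nu=0$ the sectorial $\varphi_0$ must be read through its formal jet $\hat\varphi_0$), and the reduction of the non-vanishing of $I(h(\nu),k_\nu(\eta))$ to the model case via $h(0)=0$ and $k_\nu(\eta)=O(\eta)$. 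No substantive differences.
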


The following lemma is used in the proof of Theorem~B.
\begin{lem}\label{lem:gee} The coefficient of the linear term in the expansion of $g_\nu$ around $x_1^{\nu}$ is the same as the coefficient of the linear term in the expansion of $g_{h(\nu)}^{\mathrm{mod}}$ around $\sqrt{h(\nu)}$. There are no free coefficients. Moreover, the coefficient of the quadratic term in the expansion of $g_{\nu}$ around $x_1^{\nu}$ at the bifurcation value $\nu=0$ is nonzero. 
\end{lem}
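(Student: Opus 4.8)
The plan is to prove Lemma~\ref{lem:gee} by transferring the Taylor data of the model displacement function $g_{h(\nu)}^{\mathrm{mod}}$, computed in Lemma~\ref{prop:displ}, through the sectorial conjugacy $\varphi_\nu$ to the original displacement function $g_\nu$. The conjugacy relation $f_\nu=\varphi_\nu^{-1}\circ f_{h(\nu)}^{\mathrm{mod}}\circ\varphi_\nu$ sends the fixed point $x_1^\nu$ to $\sqrt{h(\nu)}$, since $\varphi_\nu(x_1^\nu)=\sqrt{h(\nu)}$. Writing everything in the shifted coordinates via the map $k_\nu=\theta_{-\sqrt{h(\nu)}}\circ\varphi_\nu\circ\theta_{x_1^\nu}$ from \eqref{eq:k}, which satisfies $k_\nu(x)=C(\nu)x+o_\nu(x)$ with $C(\nu)\neq0$, the point $x_1^\nu$ becomes the origin and $k_\nu$ is a genuine germ of a diffeomorphism fixing $0$.

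First I would establish the linear-coefficient claim. The key observation is that the conjugacy is \emph{a conjugacy of maps, not merely a coordinate change of the displacement}: one has $g_\nu=\mathrm{id}-\varphi_\nu^{-1}\circ f_{h(\nu)}^{\mathrm{mod}}\circ\varphi_\nu$. The linear part of a time-one map at a hyperbolic fixed point is the multiplier $e^{\lambda}$, and multipliers are conjugacy invariants; hence the linear coefficient of $f_\nu$ at $x_1^\nu$ equals the linear coefficient of $f_{h(\nu)}^{\mathrm{mod}}$ at $\sqrt{h(\nu)}$, namely $e^{-2\sqrt{h(\nu)}/(1-\rho(h(\nu))\sqrt{h(\nu)})}$ from \eqref{eq:tom}. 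Consequently the linear coefficient of $g_\nu=\mathrm{id}-f_\nu$ equals $1-e^{-2\sqrt{h(\nu)}/(1-\rho(h(\nu))\sqrt{h(\nu)})}$, which is exactly the linear coefficient of $g_{h(\nu)}^{\mathrm{mod}}$ in \eqref{eq:hi}. The factors $C(\nu)$ and $C(\nu)^{-1}$ coming from the linear part of $k_\nu$ and its inverse cancel precisely because the linear part of the map is an invariant, and this cancellation is what the phrase ``there are no free coefficients'' is meant to record.

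Next I would handle the quadratic coefficient at $\nu=0$. At the bifurcation value $x_1^0=0$ and $h(0)=0$, so $f_0$ and $f_0^{\mathrm{mod}}$ are parabolic germs with multiplier $1$; here the linear coefficient of $g_0$ vanishes and the quadratic term becomes the leading one. The plan is to expand $g_0=\mathrm{id}-k_0^{-1}\circ f_0^{\mathrm{mod}}\circ k_0$ to second order. Since $f_0^{\mathrm{mod}}(x)=x-c\,x^2+O(x^3)$ with $c=a\big(0\big)\cdot 1 = 1 \neq 0$ by \eqref{eq:tom} (the parabolic quadratic coefficient of the model is nonzero, being the first formal invariant $k=2$), a direct composition with the diffeomorphism $k_0(x)=C(0)x+\cdots$ shows that the quadratic coefficient of $g_0$ is $C(0)$ times the nonzero model coefficient. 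Because $C(0)\neq0$, this is nonzero, giving the stated $c_2(0)\neq0$.

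The main obstacle is conceptual rather than computational: justifying that the sectorial (merely asymptotic, not convergent) nature of $\varphi_\nu$ does not obstruct reading off finitely many Taylor coefficients. The point is that $\varphi_\nu$ is a genuine analytic diffeomorphism on the sector $V_1^\nu$, with $\hat\varphi_\nu$ as its asymptotic expansion, so along the real attracting direction it has well-defined derivatives at $x_1^\nu$ up to any finite order, and the formal conjugacy $\hat\varphi_\nu$ correctly computes the first few Taylor coefficients of the genuine $g_\nu$. I would therefore argue that only the $1$- and $2$-jets are needed, that these agree with the jets predicted by the formal series, and that the uniform analyticity of the coefficients in $\sqrt{h(\nu)}$ from Lemma~\ref{prop:displ} together with the analyticity of $h$ and $C$ ensures the conclusions hold for all $\nu\in[0,\delta)$ and, in particular, at $\nu=0$ by continuity.
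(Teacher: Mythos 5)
Your proposal is correct and follows essentially the same route as the paper: the linear claim is the chain-rule computation showing the multiplier $f_\nu'(x_1^\nu)=(f_{h(\nu)}^{\mathrm{mod}})'(\sqrt{h(\nu)})$ is a conjugacy invariant (so the factors $C(\nu)$, $C(\nu)^{-1}$ cancel), and the quadratic claim at $\nu=0$ is the observation that for a parabolic (tangent-to-identity) germ the conjugated quadratic coefficient is $C(0)$ times the model's nonzero one, i.e.\ $f_0''(0)=C(0)(f_0^{\mathrm{mod}})''(0)\neq 0$. Your handling of the sectorial nature of $\varphi_\nu$ matches the paper's remark that the jets at $\nu=0$ are read off from the formal expansion $\hat\varphi_0$.
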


\begin{proof}
Since $\varphi_0'(0)\neq 0$, due to the continuity of $(x,\nu)\mapsto \varphi_\nu'(x)$, it follows that $\varphi_\nu'(x_{1}^{\nu})\neq 0$, for $\nu$ sufficiently small. 
Therefore, the following expansion holds:
\begin{equation}\label{eq:l}
\varphi_{\nu}(x)=\sqrt{h(\nu)}+C(\nu)(x-x_1^{\nu})+o_\nu(x-x_1^{\nu}),\ C(\nu)\neq 0.
\end{equation}
It follows by \eqref{eq:tmaps} and \eqref{eq:l} that
\begin{align}\label{eq:j}
&f_{\nu}'(x_1^{\nu})=(\varphi_\nu^{-1})'(f_{h(\nu)}^{\mathrm{mod}}\circ\varphi_\nu)(x_1^{\nu})\cdot (f_{h(\nu)}^{\mathrm{mod}})'(\varphi_\nu(x_1^{\nu}))\cdot\varphi_\nu'(x_1^{\nu})=\nonumber\\
&=\frac{1}{\varphi_\nu'(x_1^{\nu})}\cdot (f_{h(\nu)}^{\mathrm{mod}})'(\sqrt{h(\nu)})\cdot \varphi_\nu'(x_1^{\nu})=(f_{h(\nu)}^{\mathrm{mod}})'(\sqrt{h(\nu)}),\nonumber\\
&f_0''(0)=\frac{\varphi_0''(0)}{C(0)}\left((f_0^{\mathrm{mod}})'(0)-(f_0^{\mathrm{mod}})'(0)^2\right)+C(0)(f_0^\mathrm{mod})''(0)=\nonumber\\
&\ \qquad\qquad=C(0)(f_0^\mathrm{mod})''(0)=2C(0)\neq 0.
\end{align}
The last line follows since $(f_0^\mathrm{mod})'(0)=1$ (tangent to the identity). 

The fact that the expansion, i.e. the notation $o_\nu$ is uniform in $\nu\in[0,\delta)$ follows from the fact that the expansion of $f_\nu^{mod}(x)$ is uniform in $\nu\in[0,\delta)$ since it is an analytic family, and that the change of variables $\varphi_\nu$ is analytic in $\nu\in(0,\delta)$ and continuous at $\nu=0$, and therefore also expands uniformly in $\nu\in[0,\delta)$. As a consequence, the family $g_\nu(\eta)$ expands uniformly in $\nu\in[0,\delta)$ as $\eta\to 0$. \end{proof}

\begin{proof}[Proof of Theorem~B] We have:
\begin{equation}\label{eq:insert}
\ell^c(T_{\varepsilon,\nu})=(\Psi_\nu(g_{\nu}^{-1}(2\varepsilon))-\Psi_\nu(x_0))\cdot 2\varepsilon.
\end{equation}
 Put $\eta:=\theta_{-x_1^\nu} \circ g_{\nu}^{-1}(2\varepsilon) $, as in \eqref{eq:eta1}. Therefore, $g_{\nu}^{-1}(2\varepsilon)=\theta_{x_1^\nu}\circ \eta$. By \eqref{eq:phi}, we get
\begin{equation}
\Psi_\nu(g_{\nu}^{-1}(2\varepsilon))=\Psi_{h(\nu)}^{\mathrm{mod}}\circ\varphi_{\nu}(\theta_{x_1^\nu}\circ \eta).
\end{equation}
Let $k_\nu=C(\nu)y+o_\nu(y)$ be as defined in \eqref{eq:k}. We then have,  by \eqref{eq:iii} and \eqref{eq:haj}:
\begin{equation}\label{eq:help1}
\Psi_\nu(g_{\nu}^{-1}(2\varepsilon))=\left(\Psi_{h(\nu)}^{\mathrm{mod}}\circ\theta_{\sqrt{h(\nu)}}\right)(k_\nu(\eta))=I(h(\nu),k_\nu(\eta))+\Psi_{h(\nu)}^{mod}(x_0).
\end{equation} 

On the other hand, $2\varepsilon=g_\nu(\eta+x_1^\nu)$. Using Lemma~\ref{lem:gee} to get the first terms of the expansion of $g_\nu$ and inserting it together with \eqref{eq:help1} in \eqref{eq:insert}, we get expansion \eqref{eq:expa1}.

Finally, since $h(0)=0$, $k_\nu(\eta)=O(\eta)$, $h$ and $k_\nu$ are diffeomorphisms of some positive open neighborhoods of $0$, $k_\nu$ depends continuously on $\nu$ and $I(\nu,\eta)$ is non-zero, for $\eta\in [0,d)$ and $\nu\in[0,\delta)$, the same holds, for $(\nu,\eta)\mapsto I(h(\nu),k_\nu(\eta))$, possibly in smaller neighborhoods.
\end{proof}

Note that it is more convenient in applications to consider the standard length $\ell(T_{\varepsilon,\nu})$ instead of continuous length $\ell^c(T_{\varepsilon,\nu})$. Let $G:\mathbb[0,+\infty)\to[0,+\infty)$ be the periodic function of period $1$, on $[0,1)$ given by $G(s)=1-s,\ s\in(0,1)$ and $G(0)=0$. We have the following corollary:
\begin{cory}[Expansion of the standard length $\ell(T_{\varepsilon,\nu})$] \label{cor:discrete} Under assumptions of Theorem~B,
the length $\eta\mapsto \ell(T_{\varepsilon,\nu})$ admits a uniform asymptotic expansion in the Chebyshev system \eqref{eq:Cheb}, but with $I(h(\nu),k_{\nu}(\eta))$ replaced by 
$$(\mathrm{id}+G)\left(I(h(\nu),k_{\nu}(\eta))\right),$$ which also does not have zero points for any $\nu$ on some uniform interval $\eta\in[0,d)$, $d>0$. The asymptotic expansion is given by \eqref{eq:expa1}, up to the same modification.
\end{cory}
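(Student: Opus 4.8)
The plan is to deduce the discrete statement directly from the continuous expansion of Theorem~B.2, by expressing the discrete critical time $n_\varepsilon^\nu$ in terms of the continuous one $\tau_\varepsilon^\nu$ through the periodic function $G$. First I would recall from Subsection~\ref{sec:conttime} that $\ell(T_{\varepsilon,\nu})=n_\varepsilon^\nu\cdot 2\varepsilon$ whereas $\ell^c(T_{\varepsilon,\nu})=\tau_\varepsilon^\nu\cdot 2\varepsilon$, so that it suffices to compare the two critical times. The key observation is that along the attractive orbit the per-step displacement $t\mapsto g_\nu(f_\nu^t(x_0))=f_\nu^t(x_0)-f_\nu^{t+1}(x_0)$ is strictly decreasing in $t$, since the orbit converges monotonically to $x_1^\nu$. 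Comparing the defining inequalities of $n_\varepsilon^\nu$ with the equation $g_\nu(f_\nu^{\tau_\varepsilon^\nu}(x_0))=2\varepsilon$ then yields $\tau_\varepsilon^\nu\le n_\varepsilon^\nu<\tau_\varepsilon^\nu+1$, i.e. $n_\varepsilon^\nu=\lceil\tau_\varepsilon^\nu\rceil$, and hence, with $G$ as in the statement, $n_\varepsilon^\nu=\tau_\varepsilon^\nu+G(\tau_\varepsilon^\nu)=(\mathrm{id}+G)(\tau_\varepsilon^\nu)$; the integer case is accommodated precisely by the convention $G(0)=0$.

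I would then substitute. From the proof of Theorem~B.2, see \eqref{eq:help1}, we have $\tau_\varepsilon^\nu=\Psi_\nu(g_\nu^{-1}(2\varepsilon))-\Psi_\nu(x_0)=I(h(\nu),k_\nu(\eta))$ together with $2\varepsilon=g_\nu(\eta+x_1^\nu)$, where $\eta$ is the compensator variable from \eqref{eq:eta1}. Combining these with the previous step gives
\[
\ell(T_{\varepsilon,\nu})=(\mathrm{id}+G)\!\left(I(h(\nu),k_\nu(\eta))\right)\cdot g_\nu(\eta+x_1^\nu),
\]
which is exactly expansion \eqref{eq:expa1} with the common factor $I(h(\nu),k_\nu(\eta))$ replaced by $(\mathrm{id}+G)(I(h(\nu),k_\nu(\eta)))$. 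The expansion of $g_\nu(\eta+x_1^\nu)$ in powers of $\eta$ furnished by Lemma~\ref{lem:gee} is untouched, so the Chebyshev scale \eqref{eq:Cheb} carries over verbatim up to this single modification of the common factor.

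It remains to verify the non-vanishing claim. By Theorem~B.2 the factor $I(h(\nu),k_\nu(\eta))$ is bounded away from zero on a uniform interval $\eta\in[0,d)$, $\nu\in[0,\delta)$; indeed it equals the positive quantity $\tau_\varepsilon^\nu$ and tends to $+\infty$ as $\eta\to 0$. Since $G\ge 0$ everywhere, we have $(\mathrm{id}+G)(I)\ge I>0$ on the same interval, so the modified common factor inherits the non-vanishing. The step requiring the most care is the monotonicity argument underpinning $n_\varepsilon^\nu=\lceil\tau_\varepsilon^\nu\rceil$, which must hold uniformly in $\nu\in[0,\delta)$ including the parabolic value $\nu=0$; this is where one uses that $x_0$ lies in the attractive basin, ensuring the orbit is monotone and $g_\nu$ invertible near $x_1^\nu$. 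I would also emphasize the conceptual subtlety that, unlike in Theorem~B.2, the common factor $(\mathrm{id}+G)(I)$ is now genuinely \emph{oscillatory} as $\eta\to 0$; nevertheless, because it stays bounded away from zero, the count of vanishing terms at $\nu=0$, and hence the reading of the multiplicity, is unaffected.
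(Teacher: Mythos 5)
Your proposal is correct and follows essentially the same route as the paper: both reduce the claim to the relation $n_\varepsilon^\nu-\tau_\varepsilon^\nu=G(\tau_\varepsilon^\nu)$, substitute $\tau_\varepsilon^\nu=I(h(\nu),k_\nu(\eta))$ and $2\varepsilon=g_\nu(\eta+x_1^\nu)$ into \eqref{eq:expa1}, and use the boundedness of $G$ against the unboundedness of $I$ near $\eta=0$ for the non-vanishing claim. The only difference is that you derive $n_\varepsilon^\nu=\lceil\tau_\varepsilon^\nu\rceil$ from the monotonicity of the per-step displacement, whereas the paper simply cites this relation from Subsection~\ref{sec:conttime} and \cite{frenki}.
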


\begin{proof} This follows directly using the relation between continuous $\tau_{\varepsilon}^\nu$ and discrete critical time $n_{\varepsilon}^{\nu}$, which is its integer part:
$$
n_{\varepsilon}^{\nu}-\tau_{\varepsilon}^{\nu}=G(\tau_{\varepsilon}^\nu),\ \nu\in[0, \delta).
$$
For details, see Subsection~\ref{sec:conttime} and \cite{frenki}. Therefore, $\mathcal\ell(T_{\varepsilon,\nu})-\mathcal\ell^c(T_{\varepsilon,\nu})=G(\tau_{\varepsilon}^{\nu})\cdot 2\varepsilon$, and the claim follows from \eqref{eq:Cheb} and \eqref{eq:expa1} in Theorem~B. 

Moreover, since $I(h(\nu),k_{\nu}(\eta))$ is big (uniformly in $\nu$), for small $\eta\in[0,d)$, and $G$ is bounded inside $[0,1]$, it follows that $(id+G)\Big(I(h(\nu),k_{\nu}(\eta))\Big)$ is  nonzero in some small neighborhood $\eta\in[0,d)$, uniform in $\nu$.
\end{proof}

\begin{proof}[Proof of Theorem A] After the change of variables $\eta:=\theta_{-x_1^\nu} \circ g_{\nu}^{-1}(2\varepsilon  )$, we have $$\ell^c(T_{\varepsilon,\nu})(\eta)=(\Psi_\nu(\eta+x_1^\nu)-\Psi_{\nu}(x_0))\cdot g_\nu(\eta+x_1^\nu).$$ Since the first factor in brackets, for any $\nu\in[0,d)$, does not have zero points on some uniform (in $\nu$) positive interval around $\eta=0$, the multiplicity (from the right) of the zero point $0$ of $\ell^c(T_{\varepsilon,0})$ in the variable $\eta$ in the unfolding is the same as the multiplicity (from the right) of the zero point $0$ of the displacement function $g_0(\eta)$ in the unfolding, which corresponds to the multiplicity (from the right) of the singular point $0$ of the saddle-node vector field in the unfolding \eqref{eq:polje}.

By Corollary~\ref{cor:discrete}, $\mathcal\ell(T_{\varepsilon,\nu})=\big(\mathrm{id}+G\big)(\Psi_\nu(\eta+x_1^\nu)-\Psi_{\nu}(x_0))\cdot g_\nu(\eta+x_1^\nu) $. As above, the first factor does not have zero points on some in $\nu$ uniform positive interval around $\eta=0$, and the same conclusion about multiplicities follows for $\mathcal\ell(T_{\varepsilon,\nu})$ instead of $\mathcal\ell^c(T_{\varepsilon,\nu})$. \end{proof}

\section{Precise forms of the expansions of the length function $\ell^c(T_{\varepsilon,\nu})$ for all parameter values}\label{sec:comp}
The expansion \eqref{eq:expa1} in Theorem B is valid for the whole bifurcation. The variable $\eta(2\varepsilon,\nu)$ is a \emph{compensator variable} that behaves asymptotically qualitatively differently, as $\varepsilon\to 0$, depending on the case $\nu=0$, or $\nu>0$, see \eqref{eq:dod} and \eqref{eq:etatilde}. 

By \eqref{eq:etatilde}, the variable $\eta$ given in \eqref{eq:eta1}, behaves essentially as the simpler compensator $\tilde\eta$ defined in Definition~\ref{def:pet}.

In Lemma~\ref{prop:ik}, we expand $\eta$ from \eqref{eq:eta1} in a simpler square root compensator variable $\tilde\eta$ and expand $\ell^c(T_{\varepsilon,\nu})$ in a Chebyshev system in this simpler compensator variable $\tilde\eta$ instead of $\eta$. In Theorem~C we then re-group the terms of this new expansion so that, for $\nu>0$, all terms in the same block merge to the same term of the asymptotic expansion in $\varepsilon$ at the bifurcation value $\nu=0$. Hence, we show that confluence of singularities leads to confluence of asymptotic terms in the expansion of $\varepsilon\mapsto \ell^c(T_{\varepsilon,\nu})$, as $\nu\to 0$.

In particular, from Theorem~C, in Subsection~\ref{ss:prvi}, we deduce the expansions of the continuous length $\ell^c(T_{\varepsilon,\nu})$ in $\varepsilon$, as $\varepsilon\to 0$, for each of the qualitatively different cases, $\nu>0$ and $\nu=0$. Theorem~C is then used in Remark~\ref{rem:zadnji} for reading the formal class of the unfolding from fractal data.


\medskip
In Theorem~C, we use another compensator variable:
\begin{equation}\label{eq:kapa}
\kappa(x,\nu):=\frac{1}{x+\nu}.
\end{equation}
It is related to the compensator $\alpha$ from Definition~\ref{def:lc} by the following formula:
$$
\frac{d}{dx}\alpha(x,\nu)=-\frac{1}{x}\kappa(x,\nu).
$$
Evidently, $\kappa(x,\nu)\to\frac{1}{x}$, as $\nu\to 0$, moreover, for every $\delta>0$, $x^\delta\kappa(x,\nu)\to x^{-1+\delta}$, uniformly, as $\nu\to 0$. 
\smallskip

Let $\nu\mapsto h(\nu)$, $\nu\mapsto C(\nu)$, $\nu\mapsto c_2(\nu)$, $\nu\in[0,\delta)$, be as in Theorem~B, and let
$$
r(\nu):=\frac{1-e^{-\frac{2\sqrt{\nu}}{1-\rho(\nu)\sqrt{\nu}}}}{2C(0)},\ \nu\in[0,\delta). 
$$
\smallskip
\begin{thmC} The expansion of the continuous length $\ell^c(T_{\varepsilon,\nu})$ in the variable $\tilde\eta:=\tilde\eta\left(\frac{2\varepsilon}{C(0)},r(h(\nu))\right)$, as $\tilde\eta\to 0$, can be written in the form:
\begin{align}\label{eq:ce}
&\ell^c(T_{\varepsilon,\nu})\sim \left(1-e^{-\frac{2\sqrt{h(\nu)}}{1-\rho(h(\nu))\sqrt{h(\nu)}}}\right)\cdot \Bigg\{\alpha(C(\nu)\tilde\eta,2\sqrt{h(\nu)})\cdot \tilde\eta+\Bigg.\nonumber\\
&\ \ \qquad\ +\frac{\rho(h(\nu))}{2}\sum_{k=0}^{\infty}a_k(\nu)\Big[\log\tilde\eta\cdot\tilde\eta^{k+1}+\log\big(\tilde\eta+\frac{2\sqrt{h(\nu)}}{C(\nu)}\big)\cdot\tilde\eta^{k+1}\Big]+\nonumber\\
&\ \ \qquad\ \Bigg.+\sum_{k=1}^\infty \Big[a_k(\nu)\cdot\alpha\big(C(\nu)\tilde\eta,2\sqrt{h(\nu)}\big)\cdot \tilde\eta^{k+1}+N_k^\nu\big(\tilde\eta,\kappa\big(C(\nu)\tilde\eta,2\sqrt{h(\nu)}\big)\big)\Big]\Bigg\}+\nonumber\\
&\ + c_2(\nu)\cdot \Bigg\{  [\alpha(C(\nu)\tilde\eta,2\sqrt{h(\nu)})\cdot \tilde\eta^2]+\Bigg.\nonumber\\
&\ \ \qquad\ +\frac{\rho(h(\nu))}{2}\sum_{k=0}^{\infty}b_k(\nu)\Big[\log\tilde\eta\cdot\tilde\eta^{k+2}+\log\big(\tilde\eta+\frac{2\sqrt{h(\nu)}}{C(\nu)}\big)\cdot\tilde\eta^{k+2}\Big]+\nonumber\\
&\ \ \qquad \ \Bigg.+\sum_{k=1}^\infty \Big[b_k(\nu)\cdot\alpha\big(C(\nu)\tilde\eta,2\sqrt{h(\nu)}\big)\cdot \tilde\eta^{k+2}+M_{k+1}^\nu\big(\tilde\eta,\kappa\big(C(\nu)\tilde\eta,2\sqrt{h(\nu)}\big)\big)\Big]\Bigg\}.
\end{align}
Here, $c_2(0)\neq 0$, $a_0(\nu)=1$ and $\beta_0(\nu)=1$, for all $\nu\in[0,\delta)$, and $N_k^{\nu},\,M_k^\nu$ are homogenous polynomials of degree $k$ whose coefficients depend on $\nu$. 
\end{thmC}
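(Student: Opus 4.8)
The plan is to start from the uniform factorization $\ell^c(T_{\varepsilon,\nu})\sim I(h(\nu),k_\nu(\eta))\cdot g_\nu(\eta+x_1^\nu)$ of Theorem~B.2, rewrite every factor in the calibrated square-root compensator $\tilde\eta=\tilde\eta\!\left(2\varepsilon/C(0),r(h(\nu))\right)$, and then collect the resulting monomials into the two blocks prefixed by $c_1(\nu):=1-e^{-2\sqrt{h(\nu)}/(1-\rho(h(\nu))\sqrt{h(\nu)})}$ and by $c_2(\nu)$, which are precisely the coefficients of $\eta$ and $\eta^2$ in $g_\nu(\eta+x_1^\nu)$ (Lemma~\ref{prop:displ}, Lemma~\ref{lem:gee}). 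Since $r(h(\nu))=c_1(\nu)/(2C(0))$, the calibration built into $\tilde\eta$ is tuned to the leading displacement coefficient, which is what makes the regrouping merge correctly at $\nu=0$.

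First I would change the compensator variable from $\eta$ to $\tilde\eta$ using Lemma~\ref{prop:ik}, which expands $\eta$ as a power series in $\tilde\eta$ with coefficients depending (analytically in the appropriate parameter) on $\nu$. The point of the calibration by $r(h(\nu))$ and the factors $C(0),C(\nu)$ is that $k_\nu(\eta)$ acquires leading term $C(\nu)\tilde\eta$ and that the two series $\eta=\sum_{k\ge 0}a_k(\nu)\tilde\eta^{k+1}$ and $\eta^2=\sum_{k\ge 0}b_k(\nu)\tilde\eta^{k+2}$ are normalized by $a_0(\nu)=1$ and $\beta_0(\nu)=1$; consistency of this leading matching as $\nu\to 0$ (where $\tilde\eta$ degenerates to $\sqrt{2\varepsilon/C(0)}$) is exactly \eqref{eq:dod}--\eqref{eq:etatilde}.

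Next I would expand the common factor $I(h(\nu),k_\nu(\eta))$ of \eqref{eq:iii} in $\tilde\eta$, treating its pieces separately. The compensator piece $\alpha(k_\nu(\eta),2\sqrt{h(\nu)})$ cannot be Taylor expanded uniformly across the confluence $\nu\to 0$; instead I would expand it about the point $C(\nu)\tilde\eta$ and encode all corrections through the companion compensator $\kappa$ by means of $\tfrac{d}{dx}\alpha(x,\nu)=-\tfrac1x\kappa(x,\nu)$ from \eqref{eq:kapa}. This isolates the irreducible leading compensator $\alpha(C(\nu)\tilde\eta,2\sqrt{h(\nu)})$ and packs the remaining corrections into the homogeneous polynomials $N^\nu_k,M^\nu_k$ in $(\tilde\eta,\kappa(C(\nu)\tilde\eta,2\sqrt{h(\nu)}))$. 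The logarithmic piece I would split as $\tfrac{\rho(h(\nu))}{2}\big[\log k_\nu(\eta)+\log(k_\nu(\eta)+2\sqrt{h(\nu)})\big]$; inserting $k_\nu(\eta)=C(\nu)\tilde\eta(1+O(\tilde\eta))$ produces the $\log\tilde\eta\cdot\tilde\eta^{k+1}$ and $\log(\tilde\eta+2\sqrt{h(\nu)}/C(\nu))\cdot\tilde\eta^{k+1}$ terms, while the bounded remainder $-\Psi^{mod}_{h(\nu)}(x_0)$ and the constants $\log C(\nu)$ get absorbed into the $\nu$-dependent coefficients of $N^\nu_k,M^\nu_k$. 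Multiplying this expansion of $I$ by $g_\nu(\eta+x_1^\nu)=c_1(\nu)\eta+c_2(\nu)\eta^2+\cdots$ written in $\tilde\eta$, and collecting the products of $c_1(\nu)\eta$ with $I$ into the first block and those of $c_2(\nu)\eta^2$ with $I$ into the second, yields exactly \eqref{eq:ce}; that $c_2(0)\neq 0$ is Lemma~\ref{lem:gee}.

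The hard part will be the uniform bookkeeping of the non-expandable compensator $\alpha$ under the nonlinear substitution: one must verify that every correction to $\alpha(C(\nu)\tilde\eta,2\sqrt{h(\nu)})$ arising from the higher-order parts of $k_\nu(\eta)$ and of $\eta(\tilde\eta)$ reassembles into genuine homogeneous polynomials $N^\nu_k,M^\nu_k$ of the stated degree with coefficients analytic in $\nu$, and that the regrouped family remains a Chebyshev scale uniformly in $\nu\in[0,\delta)$. It is precisely this uniformity that a naive Taylor expansion of $\alpha$ destroys, and which forces the introduction of the $\kappa$-compensator; confirming it, rather than the termwise algebra, is the real content of the proof.
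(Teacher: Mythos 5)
Your proposal is correct and follows essentially the same route as the paper: starting from the factorization $I(h(\nu),k_\nu(\eta))\cdot g_\nu(\eta+x_1^\nu)$ of Theorem~B.2, passing to $\tilde\eta$ via Lemma~\ref{prop:ik} (whose calibration by $r(h(\nu))$ and $C(0)$ you correctly trace to inverting the quadratic leading part of $g_\nu$), expanding $I$ about $C(\nu)\tilde\eta$ with the corrections packaged into homogeneous polynomials in $(\tilde\eta,\kappa)$ exactly as in Lemmas~\ref{prop:aux} and \ref{lem:pomo}, and then regrouping the products with $c_1(\nu)\eta$ and $c_2(\nu)\eta^2$ into the two blocks. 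The "hard part" you flag — that the corrections reassemble triangularly into genuine homogeneous polynomials $N_k^\nu,M_k^\nu$ — is precisely what the paper isolates into Lemma~\ref{lem:pomo}.
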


The expansion is written in  such a form that the terms that give the same power-logarithmic asymptotic term in $\tilde\eta$  for $\nu=0$, with their respective coefficients in $\nu$, are grouped together as a block inside square brackets. Note that, for a fixed $\nu>0$, each block is possibly \emph{infinite} in the sense that it can be further expanded asymptotically in a convergent power-logarithmic series in $\tilde\eta$, as $\tilde\eta\to 0$. For simplicity, in Theorem~C each block is written in a closed form, as a true function of $\tilde\eta$.

Moreover, by \eqref{eq:dod}, $\tilde\eta=\sqrt{\frac{2}{C(0)}}\varepsilon^{1/2}$ for $\nu=0$, and $\tilde\eta$ expands as an integer power series in $\varepsilon$, for  $\nu>0$, so complete expansions in the original variable $\varepsilon\to 0$, for $\nu=0$ and $\nu>0$ are given in Subsection~\ref{ss:prvi}.

\medskip
In the proof of Theorem~C we need the following lemmas:

\begin{lem}[The compensator variable $\eta$ expressed by $\tilde\eta$]\label{prop:ik} Let $\eta$ be as defined in \eqref{eq:eta1} for the field \eqref{eq:polje} and let $\tilde\eta$ be as in Definition~\ref{def:pet}. Then:
$$
\eta(2\varepsilon,\nu)=\chi_{\nu} \Big(\tilde\eta\big(\frac{2\varepsilon}{C(0)},r^2(h(\nu))\big)\Big),\ \nu\in[0,\delta),
$$
where $r(\nu):=\frac{1-e^{-\frac{2\sqrt{\nu}}{1-\rho(\nu)\sqrt{\nu}}}}{2C(0)},$ and\ $\chi_\nu$ is a germ of a real diffeomorphism tangent to the identity.
\end{lem}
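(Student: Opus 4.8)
The plan is to express both $\eta$ and $\tilde\eta$ through the common variable $\varepsilon$, and to realize $\chi_\nu$ as the composition of one inverse substitution with the other. By \eqref{eq:eta1} and Lemma~\ref{lem:gee} (together with Lemma~\ref{prop:displ}), the variable $\eta$ is determined implicitly by
\[
2\varepsilon = g_\nu(\eta+x_1^\nu) = \lambda(\nu)\,\eta + c_2(\nu)\,\eta^2 + \eta^3\,\mathbb{R}_\nu\{\eta\} =: G_\nu(\eta),
\]
where $\lambda(\nu)=1-e^{-2\sqrt{h(\nu)}/(1-\rho(h(\nu))\sqrt{h(\nu)})}$, $c_2(0)=C(0)\neq0$, and the coefficients depend real-analytically on $\nu$ (through $\sqrt{h(\nu)}$). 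On the other hand, inverting Definition~\ref{def:pet} shows that $\tilde\eta:=\tilde\eta(2\varepsilon/C(0),r(h(\nu)))$ is determined by the exact quadratic relation
\[
2\varepsilon = C(0)\,\tilde\eta^{\,2} + 2C(0)\sqrt{r(h(\nu))}\,\tilde\eta =: Q_\nu(\tilde\eta).
\]

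First I would isolate the role of $r$. Its second argument is calibrated precisely so that $G_\nu$ and $Q_\nu$ have the same linear coefficient for every $\nu$: matching the linear parts is the identity $2C(0)\sqrt{r(h(\nu))}=\lambda(\nu)$, after which $Q_\nu(\tilde\eta)=\lambda(\nu)\,\tilde\eta+C(0)\,\tilde\eta^{\,2}$. Moreover the quadratic coefficient of $Q_\nu$ is $C(0)=c_2(0)$ by Lemma~\ref{lem:gee}, so $G_\nu$ and $Q_\nu$ also share the same quadratic part at $\nu=0$. One checks in passing that $r(h(\nu))\to0$ as $\nu\to0^+$, in accordance with \eqref{eq:etatilde}.

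I then set $\chi_\nu:=G_\nu^{-1}\circ Q_\nu$, so that $\eta(2\varepsilon,\nu)=\chi_\nu(\tilde\eta(2\varepsilon/C(0),r(h(\nu))))$ holds by construction, and it remains to prove that $\chi_\nu$ is a germ of a real diffeomorphism tangent to the identity. For $\nu>0$ both $G_\nu$ and $Q_\nu$ have nonzero linear part $\lambda(\nu)>0$, hence are analytic diffeomorphisms of one-sided neighborhoods of $0$; the inverse function theorem gives $\chi_\nu$ analytic with $\chi_\nu(0)=0$ and $\chi_\nu'(0)=Q_\nu'(0)/G_\nu'(0)=\lambda(\nu)/\lambda(\nu)=1$, so $\chi_\nu=\mathrm{id}+O(\tilde\eta^{\,2})$. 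For $\nu=0$ the linear part vanishes, $\lambda(0)=0$, and I would instead factor $G_0(\eta)=\eta^2\,\widehat G(\eta)$ with $\widehat G$ analytic and $\widehat G(0)=c_2(0)=C(0)$, while $Q_0(\tilde\eta)=C(0)\,\tilde\eta^{\,2}$; taking positive square roots turns the relation $G_0(\eta)=Q_0(\tilde\eta)$ into $\eta\,u(\eta)=\sqrt{C(0)}\,\tilde\eta$ with $u$ analytic and $u(0)=\sqrt{C(0)}$, which is an analytic diffeomorphism $\eta\leftrightarrow\tilde\eta$ tangent to the identity. Since all the coefficients depend continuously on $\nu$, the germs $\chi_\nu$ form a continuous family, each tangent to the identity, which is the assertion.

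The main obstacle is the confluence at $\nu=0$: the linear coefficient $\lambda(\nu)$ degenerates to zero there, so the generating maps change type from \emph{linear-leading} (for $\nu>0$) to \emph{square-root-leading} (at $\nu=0$), and no single inverse-function-theorem argument covers both regimes. The whole purpose of the square-root compensator $\tilde\eta$, through the calibrated second argument $r(h(\nu))$, is to absorb this degeneration so that $G_\nu^{-1}\circ Q_\nu$ stays tangent to the identity uniformly across $\nu=0$. Concretely, the delicate point is to verify simultaneously the matching of the $1$-jets of $G_\nu,Q_\nu$ for $\nu>0$ and of their $2$-jets at $\nu=0$, and to check that the inversion and composition depend continuously on $\nu$ as $\nu\to0^+$; the square-root factorization at $\nu=0$ is what makes the limiting germ $\chi_0$ genuinely analytic rather than merely a homeomorphism.
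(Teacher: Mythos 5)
Your proof is correct and takes essentially the same route as the paper: the paper factors $g_\nu\circ\theta_{x_1^\nu}=P_\nu\circ\psi_\nu$ with $P_\nu(x)=\big(1-e^{-2\sqrt{h(\nu)}/(1-\rho(h(\nu))\sqrt{h(\nu)})}\big)x+C(0)x^2$ and sets $\chi_\nu=\psi_\nu^{-1}=(g_\nu\circ\theta_{x_1^\nu})^{-1}\circ P_\nu$, which is exactly your $G_\nu^{-1}\circ Q_\nu$, and it verifies tangency to the identity in the two regimes ($\nu>0$ by inversion of the nondegenerate linear part, $\nu=0$ by taking the square root of a multiplicity-two germ) just as you do. One notational caveat: inverting $P_\nu$ gives $\sqrt{r^2(h(\nu))+2\varepsilon/C(0)}-r(h(\nu))$, so under Definition~\ref{def:pet} read literally the second argument of $\tilde\eta$ should be $r^2(h(\nu))$ (the paper itself writes $r(h(\nu))$ there), and consequently your calibration identity $2C(0)\sqrt{r(h(\nu))}=1-e^{-2\sqrt{h(\nu)}/(1-\rho(h(\nu))\sqrt{h(\nu)})}$ should read $2C(0)\,r(h(\nu))=1-e^{-2\sqrt{h(\nu)}/(1-\rho(h(\nu))\sqrt{h(\nu)})}$, which is what the definition of $r$ actually provides.
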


\noindent Consequently, $\eta$ possesses a Taylor expansion in the variable $\tilde\eta\big(\frac{2\varepsilon}{C(0)},r^2(h(\nu))\big)$, and 
$$
\lim_{(\varepsilon,\nu)\to (0,0)}\frac{\eta(2\varepsilon,\nu)}{\tilde\eta(\frac{2\varepsilon}{C(0)},r^2(h(\nu)))}=1.$$
\begin{proof} 
From \eqref{eq:hi} and Lemma~\ref{lem:gee}, we write $g_{\nu}$ as:
\begin{align}
g_{\nu}(x)&=\left(\big(1-e^{-\frac{2\sqrt{h(\nu)}}{1-\rho(h(\nu))\sqrt{h(\nu)}}}\big)\cdot (x-x_1^\nu)+C(0)(x-x_1^\nu)^2\right)\circ \nonumber\\
&\quad\qquad\qquad\qquad\circ\big((x-x_1^{\nu})+\sum_{i=2}^{\infty}c_i(\nu)(x-x_1^{\nu})^i\big)=\nonumber\\
&=P_{\nu}\circ\psi_{\nu}\circ\theta_{-x_1^{\nu}}(x),\label{eq:mi}
\end{align}
where $P_{\nu}(x)=\left(1-e^{-\frac{2\sqrt{h(\nu)}}{1-\rho(h(\nu))\sqrt{h(\nu)}}}\right)\cdot x+C(0)x^2$, and  $\psi_{\nu}:=\mathrm{id}+\sum_{i=2}^{\infty}c_i(\nu)x^i$
is a germ of a real diffeomeorphism, tangent to the identity at $0$. Note that $1-e^{-\frac{2\sqrt{h(\nu)}}{1-\rho(h(\nu))\sqrt{h(\nu)}}}$ is the linear coefficient of the expansion of $g_{\nu}$ around its zero point $x_1^{\nu}$, and $C(0)$ the quadratic coefficient of the expansion of $g_0$ around its zero point $0$. The coefficients $c_i(\nu)$ are explicitely determined by the above equality and the coefficients of the expansion of $g_\nu$.

Inverting explicitely, we get
$$
P_\nu^{-1}(2\varepsilon)=\sqrt{r^2(h(\nu))+\frac{2\varepsilon}{C(0)}}-r(h(\nu))=\tilde\eta\left(\frac{2\varepsilon}{C(0)},r^2(h(\nu))\right),
$$
where $\tilde\eta$ is as defined before in Definition~\ref{def:pet}, and $r(\nu):=\frac{1-e^{-\frac{2\sqrt{\nu}}{1-\rho(\nu)\sqrt{\nu}}}}{2C(0)}.$ By \eqref{eq:mi}, 
\begin{equation}\label{eq:jj}\eta(2\varepsilon,\nu)=\theta_{-x_1^\nu}\circ g_\nu^{-1}(2\varepsilon)=\varphi_{\nu} \Big(\tilde\eta\big(\frac{2\varepsilon}{C(0)},r^2(h(\nu))\big)\Big),\end{equation} 
where $\chi_{\nu}:=\psi_{\nu}^{-1}\in\mathrm{Diff}_{\mathrm{id}}(\mathbb R,0)$ is a diffeomorphism tangent to the identity. 


\noindent Note that $\psi_\nu$ is analytic, since the above equality \eqref{eq:mi} can equivalently be written  as:
$$
P_{\nu}^{-1}\circ g_\nu\circ\theta_{x_1^\nu}=\psi_\nu.
$$
Now, $P_{\nu}^{-1}\circ g_\nu\circ \theta_{x_1^\nu}$ is an analytic germ at $0$, tangent to the identity, for every $\nu\in[0,\delta)$. Indeed, for $\nu=0$, $P_{0}^{-1}=\sqrt x$, and it follows by the binomial expansion, since $g_0$ is an analytic germ of multiplicity $2$. For $\nu>0$, $g_\nu$ is an analytic germ  tangent to the identity at $x_1^\nu$, and $P_\nu$ is an analytic diffeomorphism tangent to the identity at $0$, so the composition $P_{\nu}^{-1}\circ g_\nu\circ \theta_{x_1^\nu}$ is an analytic diffeomorphism at $0$ tangent to the identity. 
\end{proof}

\smallskip

\begin{lem}[Properties of the compensator $\kappa$]\label{prop:aux}
The following properties of the compensator $\kappa$ defined in \eqref{eq:kapa} hold:
\begin{itemize}
\item[$(i)$]
$$
\frac{d}{dx}\kappa^k=-k\kappa^{k+1},\ k\geq 1,
$$

\item[$(ii)$]
$$
\frac{d^{k}}{dx^k}\alpha(x,\nu)=P_{k+1}\left(\frac{1}{x},\kappa(x,\nu)\right),\ k\in\mathbb N_{\geq 1},
$$
where $P_k$ is a homogenous polynomial in two variables of degree $k$, with coefficients independent of $\nu$,
\item[$(iii)$] $$\frac{d}{dx}\log(x+\nu)=\kappa(x,\nu),\ \frac{d^{k+1}}{dx^{k+1}}\log(x+\nu)=(-1)^kk!\cdot\kappa(x,\nu)^{k+1},\ k\geq 1.$$
\end{itemize}
\end{lem}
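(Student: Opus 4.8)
The plan is to treat the three parts in the order $(i)$, $(iii)$, $(ii)$, since the later parts build on the computational content of the earlier ones. Part $(i)$ is immediate: writing $\kappa = (x+\nu)^{-1}$, the power rule gives $\frac{d}{dx}\kappa^k = \frac{d}{dx}(x+\nu)^{-k} = -k(x+\nu)^{-k-1} = -k\kappa^{k+1}$. For part $(iii)$, the first identity $\frac{d}{dx}\log(x+\nu) = \frac{1}{x+\nu} = \kappa$ is direct. For the higher derivatives I would prove by induction on $k$ that $\frac{d^k}{dx^k}\kappa = (-1)^k k!\,\kappa^{k+1}$ for $k \geq 0$: the base case is trivial, and the inductive step uses $(i)$, since $\frac{d}{dx}\big((-1)^k k!\,\kappa^{k+1}\big) = (-1)^k k!\cdot(-(k+1))\kappa^{k+2} = (-1)^{k+1}(k+1)!\,\kappa^{k+2}$. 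Applying this to $\frac{d}{dx}\log(x+\nu)=\kappa$ yields $\frac{d^{k+1}}{dx^{k+1}}\log(x+\nu) = \frac{d^k}{dx^k}\kappa = (-1)^k k!\,\kappa^{k+1}$, as claimed.

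For part $(ii)$, the crucial observation is that both auxiliary variables $u := \frac{1}{x}$ and $v := \kappa(x,\nu) = \frac{1}{x+\nu}$ satisfy the same Riccati-type relation under $x$-differentiation: $\frac{du}{dx} = -u^2$ and, by $(i)$ with $k=1$, $\frac{dv}{dx} = -v^2$. Consequently, if $Q(u,v)$ is any homogeneous polynomial of degree $d$ with coefficients independent of $\nu$, then by the chain rule $\frac{d}{dx}Q(u,v) = -u^2\,\partial_u Q - v^2\,\partial_v Q$, which is again a homogeneous polynomial in $(u,v)$, now of degree $d+1$, whose coefficients are integer linear combinations of those of $Q$ and hence remain independent of $\nu$. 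This closure property is the engine of the proof.

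The statement then follows by induction on $k$. For the base case $k=1$, the relation $\frac{d}{dx}\alpha(x,\nu) = -\frac{1}{x}\kappa(x,\nu) = -uv$ recorded just after \eqref{eq:kapa} shows that $\frac{d}{dx}\alpha = P_2(u,v)$ with $P_2(u,v) := -uv$ homogeneous of degree $2 = k+1$ and $\nu$-independent coefficient. For the inductive step, assuming $\frac{d^k}{dx^k}\alpha = P_{k+1}(u,v)$ with $P_{k+1}$ homogeneous of degree $k+1$ and coefficients independent of $\nu$, the differentiation rule above gives $\frac{d^{k+1}}{dx^{k+1}}\alpha = -u^2\,\partial_u P_{k+1} - v^2\,\partial_v P_{k+1} =: P_{k+2}(u,v)$, homogeneous of degree $k+2$ with coefficients still independent of $\nu$, completing the induction.

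There is no serious obstacle here; the only point requiring care is the bookkeeping in part $(ii)$, namely simultaneously tracking that each differentiation raises the homogeneous degree by exactly one and preserves the $\nu$-independence of the coefficients. Both are guaranteed once one notices the uniform relation $w' = -w^2$ satisfied by both generators $u$ and $v$, which is precisely what makes the family of homogeneous polynomials in $\big(\tfrac1x,\kappa\big)$ closed under $\frac{d}{dx}$.
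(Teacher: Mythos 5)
Your proof is correct. The paper states Lemma~\ref{prop:aux} without proof, treating it as an elementary computation, and your argument is exactly the natural verification: part $(i)$ and $(iii)$ by the power rule and induction, and part $(ii)$ via the base case $\frac{d}{dx}\alpha=-\frac{1}{x}\kappa$ together with the closure of homogeneous polynomials in $\bigl(\frac{1}{x},\kappa\bigr)$ under $\frac{d}{dx}$, which follows from the shared relation $w'=-w^2$.
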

Note that, for every homogenous polynomial $Q_k$ of degree $k\in\mathbb N_{\geq 1}$, it holds that $Q_{k}\left(\frac{1}{x},\kappa(x,\nu)\right)\to c_k\frac{1}{x^{k}}$ pointwise, as $\nu\to 0$, where $c_k\in\mathbb R$.

\begin{lem}\label{lem:pomo} Let $I(\nu,\eta)$ be as in \eqref{eq:iii} and let $k_\nu$, $C(\nu)$, $h(\nu)$ be as defined in Theorem~B. Let $\tilde\eta$ be as in Theorem~C. The following expansion in $\tilde \eta$ holds:
\begin{align*}
&I(h(\nu),k_\nu(\eta))=\alpha(C(\nu)\tilde\eta,2\sqrt{h(\nu)})+\frac{\rho(h(\nu))}{2}\Big(\log\tilde\eta+\log\left(C(\nu)\tilde\eta+2\sqrt{h(\nu)}\right)\Big)+\\
&\ \ \ \ \ \ \ \ \ +\sum_{k=0}^{\infty}N_k^\nu\left(\tilde\eta,\kappa\left(C(\nu)\tilde\eta,2\sqrt{h(\nu)}\right)\right).
\end{align*}
Here, $N_k^\nu$ are homogenous polynomials of degree $k$, whose coefficients depend on $\nu$, $k\geq 0$.
\end{lem}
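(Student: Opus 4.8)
The plan is to expand the quantity $I(h(\nu),k_\nu(\eta))$ defined in \eqref{eq:iii} by first recalling its explicit form, namely $I(\nu,\eta)=\alpha(\eta,2\sqrt\nu)+\frac{\rho(\nu)}{2}\log(\eta^2+2\sqrt\nu\cdot\eta)-\Psi_\nu^{mod}(x_0)$, and then substituting $\nu\mapsto h(\nu)$ and $\eta\mapsto k_\nu(\eta)$. The key observation is that the logarithmic term factors as $\log(k_\nu(\eta)^2+2\sqrt{h(\nu)}\,k_\nu(\eta))=\log k_\nu(\eta)+\log(k_\nu(\eta)+2\sqrt{h(\nu)})$, which separates the genuinely singular compensator contributions from smoother pieces. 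By Lemma~\ref{prop:ik}, we have $\eta=\chi_\nu(\tilde\eta)$ with $\chi_\nu$ tangent to the identity, and since $k_\nu(y)=C(\nu)y+o_\nu(y)$ is also tangent-to-identity up to the factor $C(\nu)$, the composition $k_\nu(\chi_\nu(\tilde\eta))=C(\nu)\tilde\eta+o_\nu(\tilde\eta)$. The strategy is to replace the argument $k_\nu(\eta)$ everywhere by its leading term $C(\nu)\tilde\eta$ and absorb the higher-order discrepancies into the polynomial remainders $N_k^\nu$.

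First I would treat the $\alpha$-term: write $\alpha(k_\nu(\eta),2\sqrt{h(\nu)})$ and Taylor-expand it around the point $C(\nu)\tilde\eta$ in the first slot. The difference $k_\nu(\eta)-C(\nu)\tilde\eta$ is $O(\tilde\eta^2)$ with $\nu$-dependent coefficients, so by the mean-value/Taylor argument this produces $\alpha(C(\nu)\tilde\eta,2\sqrt{h(\nu)})$ plus a series of correction terms, each of which is a product of a power of $\tilde\eta$ with a derivative $\frac{d^j}{dx^j}\alpha$ evaluated at $C(\nu)\tilde\eta$. By Lemma~\ref{prop:aux}$(ii)$, each such derivative equals a homogeneous polynomial $P_{j+1}(\tfrac{1}{C(\nu)\tilde\eta},\kappa(C(\nu)\tilde\eta,2\sqrt{h(\nu)}))$, so after multiplying by the appropriate power of $\tilde\eta$ the correction terms collapse into homogeneous polynomials in $\tilde\eta$ and $\kappa(C(\nu)\tilde\eta,2\sqrt{h(\nu)})$ of controlled degree, which I would collect into the $N_k^\nu$.

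Next I would treat the two logarithmic terms. The factor $\log k_\nu(\eta)=\log(C(\nu)\tilde\eta(1+O(\tilde\eta)))=\log\tilde\eta+\log C(\nu)+\log(1+O(\tilde\eta))$, where the last logarithm expands as an ordinary convergent power series in $\tilde\eta$ (no singularity, since the argument is near $1$); this contributes the isolated $\log\tilde\eta$ term together with analytic corrections that again join the $N_k^\nu$. The factor $\log(k_\nu(\eta)+2\sqrt{h(\nu)})$ I would expand around $C(\nu)\tilde\eta+2\sqrt{h(\nu)}$ using Lemma~\ref{prop:aux}$(iii)$, whose derivatives are powers of $\kappa(C(\nu)\tilde\eta,2\sqrt{h(\nu)})$, once more yielding homogeneous polynomials in $\tilde\eta$ and $\kappa$. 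The constant term $-\Psi_{h(\nu)}^{mod}(x_0)$ is absorbed into $N_0^\nu$.

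The main obstacle I anticipate is bookkeeping rather than conceptual: one must verify that the regrouping genuinely produces \emph{homogeneous} polynomials $N_k^\nu$ of \emph{exactly} degree $k$ in the pair $(\tilde\eta,\kappa)$, since each derivative of $\alpha$ or of the logarithm carries both an explicit power of $\tilde\eta$ (from the chain rule applied to $k_\nu(\chi_\nu(\tilde\eta))$) and a polynomial in $\kappa$ and $1/(C(\nu)\tilde\eta)$ of compensating weight. The delicate point is that the factors $\tfrac{1}{x}$ appearing in $P_{j+1}$ from Lemma~\ref{prop:aux}$(ii)$ must be cancelled by sufficiently high powers of $\tilde\eta$ coming from the expansion of $k_\nu(\chi_\nu(\tilde\eta))-C(\nu)\tilde\eta$, so that no negative powers of $\tilde\eta$ survive; tracking this weight-balancing carefully is what makes the homogeneity claim hold and is where I would concentrate the rigor. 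Finally, convergence of the resulting series $\sum_{k\geq 0}N_k^\nu$ follows from the analyticity of $\chi_\nu$ and $k_\nu$ established in Lemma~\ref{prop:ik} and the convergence of the Taylor expansions of $\alpha$ and $\log$ in a neighborhood of the base points.
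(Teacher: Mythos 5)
Your proposal is correct and follows essentially the same route as the paper: the same splitting of the logarithm, the same substitution $k_\nu(\eta)=k_\nu(\chi_\nu(\tilde\eta))=C(\nu)\tilde\eta+O_\nu(\tilde\eta^2)$ via Lemma~\ref{prop:ik}, and the same Taylor expansions of $\alpha$ and $\log$ about $C(\nu)\tilde\eta$ controlled by Lemma~\ref{prop:aux}, with the corrections regrouped into the homogeneous polynomials $N_k^\nu$. The weight-balancing issue you flag is exactly what the paper resolves by its ``triangular'' regrouping of the terms $P_{j+1}\bigl(\tfrac{1}{C(\nu)\tilde\eta},\kappa\bigr)\cdot(K_\nu(\tilde\eta)-C(\nu)\tilde\eta)^j$.
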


\begin{proof} By \eqref{eq:iii},
\begin{align*}
I(h(\nu),k_\nu(\eta))=\alpha(k_\nu(\eta),2\sqrt{h(\nu)})+&\frac{\rho(h(\nu))}{2}\log \big(k_\nu^2(\eta)+\\
&\qquad +2\sqrt{h(\nu)}\cdot k_\nu(\eta)\big)-\Psi_{h(\nu)}^{mod}(x_0).
\end{align*}
Recall, from Theorem B, that $k_\nu(\eta)=C(\nu)\eta+o_\nu(\eta)$ is a real analytic germ of a diffeomorphism, analytic also in $\nu$, with asymptotic expansion as $\tilde\eta\to 0+$ in $\mathbb R\{\nu\}[[\tilde\eta]]$. On the other hand, by Lemma~\ref{prop:ik}, $\eta=\chi_\nu(\tilde\eta)$, where $\chi_\nu$  is a diffeomorphism tangent to the identity. Therefore, putting $K_\nu:=k_\nu\circ \chi_\nu$, we get
$$
k_\nu(\eta)=K_\nu(\tilde\eta),\ K_\nu=C(\nu)\cdot\mathrm{id}+h.o.t.,
$$
where $K_\nu$ is an analytic germ of a diffeomorphism, for every $\nu\in[0,\delta)$, i.e. with asymptotic expansion as $\tilde\eta\to 0+$ in $\mathbb R\{\nu\}[[\tilde\eta]]$.

We expand, using Lemma~\ref{prop:aux} and denoting by $\partial_1$ the partial derivative with respect to the first variable:
\begin{align*}
&\alpha(k_\nu(\eta),2\sqrt{h(\nu)})=\alpha(K_\nu(\tilde\eta),2\sqrt{h(\nu)})=\\
&\ =\alpha(C(\nu)\tilde\eta,2\sqrt{h(\nu)})+\partial_1\alpha(C(\nu)\tilde\eta,2\sqrt{h(\nu)})\cdot (K_\nu(\tilde\eta)-C(\nu)\tilde\eta)+\\
&\ \ \ +\frac{1}{2}\partial_1^2\alpha(C(\nu)\tilde\eta,2\sqrt{h(\nu)})\cdot (K_\nu(\tilde\eta)-C(\nu)\tilde\eta)^2+o_\nu((K_\nu(\tilde\eta)-C(\nu)\tilde\eta)^2)\\
&\ =\alpha(C(\nu)\tilde\eta,2\sqrt{h(\nu)})+\\
&\qquad\qquad\qquad+\sum_{k=1}^\infty P_{k+1}\left(\frac{1}{C(\nu)\tilde\eta},\kappa\left(C(\nu)\tilde\eta,2\sqrt{h(\nu)}\right)\right)\!\cdot\! (K_\nu(\tilde\eta)-C(\nu)\tilde\eta)^k=\\
&\ =\alpha(C(\nu)\tilde\eta,2\sqrt{h(\nu)})+\sum_{k=0}^{\infty}H_k^\nu\left(\tilde\eta,\kappa\left(C(\nu)\tilde\eta,2\sqrt{h(\nu)}\right)\right).
\end{align*}
Here, the coefficients of $P_k$ do not depend on $\nu$. The last line is obtained re-grouping the terms triangularly, where $H_k^\nu$ are homogenous polynomials of degree $k$ whose coefficients depend on $\nu$, $k\geq 0$.

Furthermore, by Taylor expansion of the logarithmic term and Lemma~\ref{prop:aux}, we get:
\begin{align*}
&\log \big(k_\nu^2(\eta)+2\sqrt{h(\nu)}\cdot k_\nu(\eta)\big)=\\
&\ =\log \big(K_\nu^2(\tilde\eta)+2\sqrt{h^{-1}(\nu)}\cdot K_\nu(\tilde\eta)\big)=\log\left(K_\nu(\tilde\eta)\right)+\log(K_\nu(\tilde\eta)+2\sqrt{h(\nu)})=\\
&\ =\log(\tilde\eta)+r_\nu(\tilde\eta)+\log\left(C(\nu)\tilde\eta+2\sqrt{h(\nu)}\right)+\\
&\ \ \ \ +\kappa\left(C(\nu)\tilde\eta,2\sqrt{h(\nu)}\right)\left(K_\nu(\tilde\eta)-C(\nu)\tilde\eta\right)+\\
&\ \ \ \  +\sum_{k=2}^{\infty}(-1)^{k-1}(k-1)!\cdot\kappa^{k}\left(C(\nu)\tilde\eta,2\sqrt{h(\nu)}\right)\cdot \left(K_\nu(\tilde\eta)-C(\nu)\tilde\eta\right)^k=\\
&\ =\log\tilde\eta+r_\nu(\tilde\eta)+\log\left(C(\nu)\tilde\eta+2\sqrt{h(\nu)}\right)+\sum_{k=3}^{\infty}M_k^\nu\left(\tilde\eta,\kappa\big(C(\nu)\tilde\eta,2\sqrt{h(\nu)}\big)\right).
\end{align*}
Here, $r_\nu$ is an analytic germ of diffeomorphism, with asymptotic expansion as $\tilde\eta\to 0+$ in $\mathbb R[[\tilde\eta]]$, for every $\nu\in[0,\delta)$. The coefficients of the expansion are analytic germs at $\nu=0$, as also is $C_\nu$. Also, $M_k^\nu$ are homogenous two-variable polynomials of degree $k$ with coefficients depending on $\nu$.
\end{proof}

\begin{proof}[Proof of Theorem~C] We use the expansion \eqref{eq:expa1}, from Theorem~B, the fact that $\eta=\tilde{\eta}+O_{\nu}(\tilde{\eta}^2)$, which follows by Lemma~\ref{prop:ik} and Lemma~\ref{lem:pomo}. The expansion follows after regrouping in a same block the terms (with their respective coefficients in $\nu$) that merge to the same asymptotic term in $\tilde\eta$ for $\nu=0$. Note that $c_2(0)\neq 0$, so $c_2(\nu)\neq 0$, for $\nu\in[0,\delta)$, by continuity. Therefore, all terms in expansion \eqref{eq:expa1} after $c_2(\nu)\cdot I(h(\nu),k_\nu(\eta))\eta^2$ can be factored through $c_2(\nu)$. 


\end{proof}

\subsection{Expansions in cases $\nu=0$ and $\nu>0$}\label{ss:prvi}
We now use the expansion \eqref{eq:ce} from Theorem~C to get expansions in $\varepsilon$, as $\varepsilon\to 0$. 

In the case $\nu=0$, $\tilde\eta=\sqrt{\frac{2\varepsilon}{C(0)}}$, and \eqref{eq:ce} immeditely becomes:
\begin{align*}
\ell^c(T_{\varepsilon,0})&\sim \frac{c_2(0)}{C(0)}\tilde\eta+\rho(0)\sum_{k=0}^\infty b_k(0)\tilde\eta^{k+2}\log\tilde\eta+\sum_{k=1}^{\infty}c_k\tilde\eta^{k+2}\\
&=\frac{c_2(0)\sqrt 2}{C(0)^{3/2}}\varepsilon^{\frac 1 2}+\rho(0)\sum_{k=0}^\infty c_k\varepsilon^{\frac{k+2}{2}}\log\varepsilon+\sum_{k=1}^{\infty}d_k\varepsilon^{\frac{k+2}{2}},\ \varepsilon\to 0,\ c_k,\,d_k\in\mathbb R. 
\end{align*}
Note that logarithmic terms appear only thanks to nontrivial residual invariant $\rho(0)$ of the parabolic time-one map for $\nu=0$.

\medskip
In the case $\nu>0$, under notation of Theorem~C, by \eqref{eq:dod}
$$
\tilde\eta=\frac{2\varepsilon}{C(0)\cdot r(h(\nu))}+o(\varepsilon)\in\mathbb R_\nu\{\varepsilon\}.
$$
Furthermore,
\begin{align}\label{eq:too}
\alpha(C(\nu)\tilde\eta,2\sqrt{h(\nu)})\sim&-\frac{1}{2\sqrt{h(\nu)}}\log\tilde\eta+\frac{1}{2\sqrt{h(\nu)}}\log\frac{2\sqrt{h(\nu)}}{C(\nu)}+\tilde\eta\mathbb R_{\nu}\{\tilde\eta\}\nonumber\\
\sim-\frac{1}{2\sqrt{h(\nu)}}&\log\varepsilon+\frac{1}{2\sqrt{h(\nu)}}\log\frac{C(0)r(h(\nu))\sqrt{h(\nu)}}{C(\nu)}+\varepsilon\mathbb R_{\nu}\{\varepsilon\},\nonumber\\
\log\left(\tilde\eta+\frac{2\sqrt{h(\nu)}}{C(\nu)}\right)\sim&\log\frac{2\sqrt{h(\nu)}}{C(\nu)}+\frac{C(\nu)}{2\sqrt{h(\nu)}}\tilde\eta+\tilde\eta^2\mathbb R_\nu\{\tilde\eta\}\nonumber\\
\sim&\log\frac{2\sqrt{h(\nu)}}{C(\nu)}+\frac{2C(\nu)}{C(0)r(h(\nu))\sqrt{h(\nu)}}\varepsilon+\varepsilon^2\mathbb R_\nu\{\varepsilon\},\nonumber\\
\kappa(C(\nu)\tilde\eta,2\sqrt{h(\nu)})\sim&\frac{1}{2\sqrt{h(\nu)}}-\frac{C(\nu)}{4h(\nu)}\tilde\eta+\tilde\eta^2\mathbb R_\nu\{\tilde\eta\}\nonumber\\
\sim&\frac{1}{2\sqrt{h(\nu)}}-\frac{C(\nu)}{2C(0)h(\nu)r(h(\nu))}\varepsilon+\varepsilon^2\mathbb R_\nu\{\varepsilon\}, \ \varepsilon\to 0.
\end{align}
Inserting \eqref{eq:too} in \eqref{eq:ce}, we see that there are no noninteger powers of $\varepsilon$ in the expansion, but there are additional logarithmic terms that are not related to non-zero residual invariant, coming from compensators. The monomials in the expansion are  $\varepsilon^k$, $k\in\mathbb N_0$, and $\varepsilon^k\log\varepsilon$, $k\geq 1$. 

More precisely, using the above calculations and proof of Corollary~\ref{cor:discrete} for the relation between $\ell(T_{\varepsilon,\nu})$ and $\ell^c(T_{\varepsilon,\nu})$:
\begin{align}\label{eq:too1}
\ell(T_{\varepsilon,\nu})=\bigg(\frac{1}{\sqrt{h(\nu)}}-\frac{\rho(h(\nu))}{C(0)}\bigg)\varepsilon (-\log\varepsilon)+
o(\varepsilon\log\varepsilon),\ \varepsilon\to 0.
\end{align}
Note that the first term $\varepsilon(-\log\varepsilon)$ in the case $\nu>0$ exists even if  the residual invariant $\rho(\nu)$ is zero. It is related to the hyperbolic nature of the orbit, as compared with the parabolic nature  in the case $\nu=0$, where logarithms appear only thanks to the nonzero residual term.
\begin{obs}[Reading the analytic invariant $\rho(\nu)$]\label{rem:zadnji} This paper was concerned with reading the analytic invariant $k$ (the multiplicity of $0$ in the unfolding) of a generic analytic unfolding of vector field \eqref{eq:polje},\ \eqref{eq:gene} with a saddle-node singular point, from the Chebyshev system expanding the length of the $\varepsilon$-neighborhoods of orbits in the unfolding. It is equal to the number of terms of this Chebyshev system that  \emph{disappear} at the bifurcation value $\nu=0$. 

Note that also the other invariant $\nu\mapsto\rho(\nu),\ \nu\geq 0,$  can be read from $\varepsilon$-neighborhoods of hyperbolic orbits in the unfolding. Indeed, under the assumption that the unfolding is already prenormalized by a constant homothecy $\varphi_{\nu}(x)=ax$, $\nu\geq 0$, we may assume that $\varphi_0'(0)=1$, that is, that $C(0)=1$. In that case, from the first term, expanded asymptotically as function of $\nu>0$, of the hyperbolic expansion as $\varepsilon\to 0$ of $\varepsilon\mapsto \mathcal \ell(T_{\varepsilon,\nu})$, we read $h(\nu)$ and then the other invariant 
$\rho(\nu)$ of the unfolding, as can be seen by the formula \eqref{eq:too1}:
$$
\ell(T_{\varepsilon,\nu})\sim \bigg(\frac{1}{\sqrt{h(\nu)}}-\rho(h(\nu))\bigg)\varepsilon (-\log\varepsilon),
\ \varepsilon\to 0,\ \nu>0.$$
Note that $h(\nu)\to 0$, as $\nu\to 0$, and that $\nu\mapsto h(\nu)$ and $\nu\mapsto\rho(\nu)$ are analytic germs at $\nu=0$ (since the unfolding is analytic in the variable and in the parameter, so the analytic normal form \eqref{eq:model} is analytic in $\nu$).  Let $h(\nu)=\sum_{i\geq 1}a_i \nu^i$, $a_i\in\mathbb R$, and $\rho(\nu)=\rho(0)+\sum_{i\geq 1}b_i \nu^i,$ $b_i\in\mathbb R,$ be their Taylor expansions. Note that $\rho(0)=\lim_{\nu\to 0}\rho(h(\nu))$ is the invariant of the  saddle-node point for bifurcation value $\nu=0$. Expanding the first coefficient of \eqref{eq:too1} in $\nu$, as $\nu\to 0$,
$$
\frac{1}{\sqrt{h(\nu)}}-\rho(h(\nu))
$$
we recover $a_i$ triangularly from coefficients of rational powers $k-\frac{1}{2}$, $k\in\mathbb N_0$, of $\nu$, and, simultaneously, $b_i$ from coefficients of integer powers of $\nu$. In other words, the coefficient of $\varepsilon(-\log\varepsilon)$, as function of $\nu>0$, can be decomposed in a unique way as a sum of an analytic function and a negative square root of an analytic function. The function under the negative square root is then $h(\nu)$ and $\rho(\nu)$ is the other analytic function postcomposed by $h^{-1}(\nu)$.

Note that this reconstruction of $h(\nu)$ and $\rho(\nu)$ relies heavily on the fact that normal forms are analytic in the parameter. On the other hand, just the multiplicity $k$ in the unfolding can be reconstructed in the same way for smooth unfoldings, if the  order of smoothness is bigger than the multiplicity \cite{KR}.
\end{obs}

\section{Concluding remark: the case $\nu<0$}\label{ss:drugi}\
Throughout this paper, we restricted the study of the unfolding \eqref{eq:polje} to parameter values $\nu\in[0,\delta)$. We explain here the reasons for this restriction.

For $\nu>0$, there are two real singular points. The orbit of $x_0>0$ \emph{accumulates} at one of these singular points. 
If $\nu<0$, there are no \emph{real} singular points and the real orbit passes near zero and goes through to $-\infty$ in a finite real time. 
However, it can be seen that, the smaller the $|\nu|$, the more and more iterations are defined on the real line (of order of growth $|\nu|^{-1/2}$). That is, as $\nu\to 0$, the  \emph{density} of iterates around $0$ increases. 
Therefore, for a small $\varepsilon>0$ and $|\nu|$ sufficiently small with respect to $\varepsilon$, it is possible to define the critical time, and, consequently, the \emph{tail}, as the union of $\varepsilon$-neighborhoods of finitely many first iterations from $x_0$, as long as the distance between the consecutive two points remains larger than or equal to $2\varepsilon$. However, the tail will thus be defined only in a  region of $(\varepsilon_{>0},\nu)$-half-plane. It is a full neighborhood of the origin in the first quadrant, but not in the fourth quadrant. 

Additional difficulty is that, unlike the case $\nu>0$, for $\nu<0$ there are no fixed points of the time-one map on the real line to which the orbits on the real line converge. Therefore, we do not have a \emph{good} choice of a point for the asymptotic expansions as in Proposition~\ref{prop:pm} and Theorem B. 

For $\nu\geq0$, we have calculated the critical time using the time coordinate. In order to have a uniform expansion, we need some continuity of the time coordinate with respect to $\nu$ (continuity of the domain of definition and of the function), that was obtained by use of the \emph{compensators}. 
For $\nu>0$, we use the time coordinate defined around one (right-most) hyperbolic singular point, which extends until the left singular point, where it ramifies. Its domain and the time coordinate itself converge to the 'global' time coordinate $\frac{1}{z}+\rho(0)\log z$ as $\nu\to 0$ (see also Glutsyuk \cite{G}.) 

For $\nu>0$ the fundamental domains are annuli around singular points on the real line. For $\nu<0$, the \emph{real} orbit of $x_0>0$ lies in the passage between the two (complex) indifferent singular points with rotational linear part.
Here, the natural space of orbits is a \emph{crescent-like} fundamental domain with the two tips at the two complex critical points. 
This approach was studied by Lavaurs in \cite{L} and resumed in \cite{MRouss}. In \cite{MRouss}, the authors precise the difference between the two charts which they call Lavaurs and Glutsyuk charts. 

Opening of the passage between the two singular points for $\nu<0$ gives a mapping between the two domains:
as each crescent-like fundamental domain, for $\nu<0$, corresponds holomorphically  (by passing to the quotient) to a Riemann sphere with two marked points, a global mapping between these crescents corresponds to a global mapping of $\mathbb{CP}^1$ preserving two points ($0$ and $\infty$). Hence, to a linear mapping determined by one number, which is called the \emph{Lavaurs period}. Note that the Lavaurs period does not have a limit as $\nu$ tends to zero.

The study of the critical time for $\nu<0$ may be related to the concept of Lavaurs period. 

\color{black}

\section{Declarations}
\subsection{Ethical Approval}
Not applicable.

\subsection{Funding}
All authors are  supported by the Croatian Science Foundation grants PZS-2019-02-3055 and IP-2022-10-9820, and all authors except R.H. by the  bilateral Hubert-Curien Cogito grants 2021-22 and 2023-2024. Maja Resman is supported by the Croatian Science Foundation grant no. UIP-2017-05-1020. Pavao Marde\v si\' c and Maja Resman also express their gratitude to the Fields Institute for supporting their stay in the scope of the \emph{Thematic Program on Tame Geometry, Transseries and Applications to Analysis and Geometry 2022}. Pavao Marde\v si\' c was also partially supported by  
EIPHI Graduate School (contract ANR-17-EURE-0002).

\bibliographystyle{plain}

\end{document}